\newtheorem{Lemma}{Lemma}[section]
\newtheorem{Proposition}[Lemma]{Proposition}
\newtheorem{Theorem}[Lemma]{Theorem}
\newtheorem{Corollary}[Lemma]{Corollary}
\newtheorem{Claim}[Lemma]{Claim}
\newcommand{\gcal}{\mathcal{G}}
\newcommand{\eps}{\varepsilon}
\newcommand{\bbP}{{\mathbb{P}}}
\newcommand{\Real}{{\mathbb{R}}}
\newcommand{\E}{\mathbb{E}}
\newcommand{\prob}{\mathbb{P}}
\newcommand{\jcal}{\mathcal J}
\newcommand{\ga}{\alpha}
\newcommand{\gd}{\delta}
\newcommand{\gL}{\Lambda}
\newcommand{\Ent}{\operatorname{Ent}}
\newcommand{\Var}{\operatorname{Var}}
\newcommand{\ig}{I^{\gcal}}
\begin{document}
\begin{frontmatter}

\title{Geometric influences}
\runtitle{Geometric influences}

\begin{aug}
\author[A]{\fnms{Nathan} \snm{Keller}\thanksref{t1}\ead[label=e1]{nathan.keller@weizmann.ac.il}},
\author[B]{\fnms{Elchanan} \snm{Mossel}\corref{}\thanksref{t2}\ead[label=e2]{mossel@stat.berkeley.edu}} and
\author[C]{\fnms{Arnab} \snm{Sen}\thanksref{t3}\ead[label=e3]{a.sen@statslab.cam.ac.uk}}
\runauthor{N. Keller, E. Mossel and A. Sen}
\affiliation{Weizmann Institute of Science,
Weizmann Institute of Science and University of California, Berkeley,
and Cambridge University}
\address[A]{N. Keller\\
Faculty of Mathematics\\
\quad and Computer Science\\
The Weizmann Institute of Science\\
Rehovot\\
Israel\\
\printead{e1}} 
\address[B]{E. Mossel\\
Department of Statistics\\
University of California, Berkeley\\
367 Evans Hall Berkeley, California 94720\\
USA\\
\printead{e2}}
\address[C]{A. Sen\\
Statistical Laboratory\\
Department of Pure Mathematics\\
\quad and Mathematical Sciences\\
Wilberforce Road, CB3 0WB\\
United Kingdom\\
\printead{e3}}
\end{aug}

\thankstext{t1}{Supported by the Koshland Center for Basic
Research. Some of this work was conducted when the author was at the Hebrew
University of Jerusalem, and was supported by the Adams Fellowship Program
of the Israeli Academy of Sciences of Humanities.}

\thankstext{t2}{Supported by CAREER Award DMS-05-48249, by ISF Grant 1300/08,
by a Minerva Foundation grant and by an ERC Marie Curie Grant 2008 239317.}

\thankstext{t3}{Most of this work was conducted when the author was at
University of California, Berkeley, and was supported by DOD ONR Grant N0014-07-1-05-06.}

\received{\smonth{4} \syear{2010}}
\revised{\smonth{11} \syear{2010}}

%
\begin{abstract}
We present a new definition of influences in product spaces of
continuous distributions. Our definition is geometric, and for monotone
sets it is identical with the measure of the boundary with respect to
uniform enlargement. We prove analogs of the Kahn--Kalai--Linial (KKL)
and Talagrand's influence sum bounds for the new definition. We further
prove an analog of a result of Friedgut showing that sets with small
``influence sum'' are essentially determined by a small number of
coordinates. In particular, we establish the following tight analog of
the KKL bound: for any set in $\mathbb R^n$ of Gaussian measure~$t$,
there exists a coordinate $i$ such that the $i$th geometric influence
of the set is at least $ct(1-t)\sqrt{\log n}/n$, where $c$ is a
universal constant. This result is then used to obtain an isoperimetric
inequality for the Gaussian measure on $\mathbb{R}^n$ and the class of
sets invariant under transitive permutation group of the coordinates.
\end{abstract}

%
\begin{keyword}[class=AMS]
\kwd{60C05}
\kwd{05D40}.
\end{keyword}
\begin{keyword}
\kwd{Influences}
\kwd{product space}
\kwd{Kahn--Kalai--Linial influence bound}
\kwd{Gaussian measure}
\kwd{isoperimetric inequality}.
\end{keyword}

\vspace*{-3pt}
\end{frontmatter}

\section{Introduction}\label{sec-int}\vspace*{-3pt}

\begin{definition}
Let $f\dvtx\{0,1\}^n \rightarrow\{0,1\}$ be a Boolean function. The
influence of the $i$th coordinate on $f$ is
\[
I_i(f):= \bbP_{x \sim\mu} [f(x) \neq f(x \oplus e_i)],
\]
where $\bbP_{x \sim\mu}$ denotes probability when $x$ is chosen at
random according to a probability measure $\mu$, and $x\oplus e_i$
denotes the point obtained from $x$ by replacing $x_i$ by $1-x_i$
and leaving the other coordinates unchanged.\vadjust{\goodbreak}
\end{definition}

The notion of influences of variables on Boolean functions is one
of the central concepts in the theory of discrete harmonic
analysis. In the last two decades it found several applications in
diverse fields, including combinatorics, theoretical computer
science, statistical physics, social choice theory, etc.
(see, e.g., the survey article \cite{kalai06}). The influences have numerous
properties that allow us to use them in applications. The following
three properties are among the most fundamental ones:
\begin{longlist}[(2)]
\item[(1)] \textit{Geometric meaning.} The influences on the discrete
cube $\{0,1\}^n$ have a clear geometric meaning. $I_i(f)$ is the
measure of the \textit{edge boundary in the $i$th direction} of the
set $A=\{x \in\{0,1\}^n\dvtx f(x)=1\}$.

\item[(2)] \textit{The KKL theorem.} In the remarkable
paper \cite{kahn88}, Kahn, Kalai and Linial proved that for any
Boolean function $f\dvtx\{0,1\}^n \rightarrow\{0,1\}$, there exists a
variable $i$ whose influence is at least $ct(1-t)\log n/n$, where
$t=\mathbb{E} [f]$ is the expectation of~$f$, and $c$ is a
universal constant. Many applications of influences make use of
the KKL theorem or of related results such
as \cite{Talagrand1,Friedgut1} in one way or another.

\item[(3)] \textit{The Russo lemma.} Let $\mu_p$ denote the Bernoulli
measure where $0$ is given weight $1-p$ and $1$ is given weight
$p$. Clearly if $A \subseteq\{0, 1\}^n$ is monotone increasing
[i.e., satisfies the condition that if $(x_1,\ldots,x_n) \in A$
and $y_j \geq x_j$ for all $j$, then $y=(y_1,\ldots,y_n) \in A$],
then $\mu_p^{\otimes n}(A)$ is monotone increasing as function of
$p$. The question of understanding how $\mu_p^{\otimes n}(A)$
varies with $p$ has important applications in the theory of random
graphs and in percolation theory. Russo's lemma
\cite{margulis74,russo82} asserts that the derivative of $\mu
_p^{\otimes n}(A)$
with respect to $p$ is the sum of influences of $f=1_A$.
\end{longlist}
The basic results on influences were obtained for
functions on the discrete cube, but some applications required
generalization of the results to more general product spaces.
Unlike the discrete case, where there exists a single natural
definition of influence, for general product spaces several
definitions were presented in different
papers (see, e.g., \cite{bourgain92,hatami09,keller09b,mossel09}).
While each of these definitions has its advantages, in
general all of them lack geometric interpretation for continuous
probability spaces.

In this paper we present a new definition of the
influences in product spaces of continuous random variables, that
has a clear geometric meaning. We show that for the Gaussian
measure and for a more general class of log-concave product
measures called Boltzmann measures (see
Definition~\ref{defboltzmann}), our definition allows us to obtain
analogs of the KKL theorem and Russo-type formulas.
\begin{definition}
Let $\nu_i$ be a probability measure on $\mathbb{R}$. Given a
Borel-measurable set $A \subseteq\mathbb{R}$, its lower Minkowski
content $\nu_i^{+}(A)$ is defined as
\[
\nu_i^{+}(A) := \liminf_{ r \downarrow0} \frac{\nu_i( A + [-r,
r]) - \nu_i(A)}{r}.
\]
Consider the product measure $\nu= \nu_1 \otimes\nu_2 \otimes
\cdots\otimes\nu_n$ on $\mathbb{R}^n$. Then for
any~Bo\-rel-measurable set $A \subseteq\mathbb{R}^n$, for each $ 1 \leq
i \leq n$ and an element $x =(x_1, x_2, \ldots,\allowbreak x_n) \in
\mathbb{R}^n$, the restriction of $A$ along the fiber of $x$ in
the $i$th direction is given by
\[
A^{x}_i := \{ y \in\mathbb{R}\dvtx(x_1, \ldots, x_{i-1}, y,
x_{i+1}, \ldots, x_n) \in A \}.
\]
The \textit{geometric influence} of the $i$th coordinate on $A$ is
\[
\ig_i(A) := \mathbb{E}_{x} [\nu_i^{+} (A^{x}_i)],
\]
that is, the expectation of $\nu_i^{+} (A^{x}_i)$ when $x$ is
chosen according to the measure~$\nu$. For sake of clarity,
we sometimes denote the influence as $\ig_i(A) |_{\nu}$.
\end{definition}

The geometric meaning of the influence is that for a monotone
(either increasing or decreasing) set $A$, the sum of influences
of $A$ is equal to the size of its boundary with respect to a
uniform enlargement as shown in the following
proposition.
\begin{Proposition}\label{luniformenlarge}
Let $\nu$ be a probability measure on $\mathbb{R}$ with $C^1$
density~$\lambda$ and cumulative distribution function $\Lambda$.
Assume further that $\lambda(z) > 0$ for all $z \in\mathbb{R}$,
that $\lim_{ |z| \rightarrow\infty} \lambda(z) = 0$ and that~$\lambda'$
is bounded. Let $A \subset\Real^n$ be a~monotone set. Then
\[
\lim_{r \downarrow0} \frac{\nu^{\otimes n} (A+ [-r,r]^n) -
\nu^{\otimes n} (A)}{r} = \sum_{i=1}^n \ig_i(A).
\]
\end{Proposition}

Though the boundary under uniform enlargement is perhaps not as
prevalent in the literature as the usual $L^2$ boundary (where we
fatten a set by adding a $L^2$-ball of radius $r$ instead of a
$L^\infty$-ball), the isoperimetric problem for the boundary under
uniform enlargement, especially in the context of log-concave measures,
was studied, for example, in \cite{bobkov96,bobkov97,barthe04}. Note
that for the Gaussian measure on $\mathbb R^n$, unlike the usual $L^2$
boundary, the boundary under uniform enlargement of a set is not
invariant under rotation.

We show that for the Gaussian measure on $\mathbb{R}^n$,
the geometric influences satisfy the following analog of the KKL
theorem:
\begin{Theorem}\label{ThmOur-KKL}
Consider the product spaces $\mathbb R^n$ endowed with the product
Gaussian measure $\mu^{\otimes n}$.
Then for any Borel-measurable set $A \subset\Real^n$ with
$\mu^{\otimes n}(A)=t$
there exists $1 \leq i \leq n$ such that
\[
\ig_i(A) \ge ct(1-t) \frac{ \sqrt{\log n}}{n},
\]
where $c>0$ is a universal constant.
\end{Theorem}

The result extends to the family of Boltzmann measures (see
Definition~\ref{defboltzmann}), and is tight up to the constant
factor. The proof uses the\vadjust{\goodbreak} relation between geometric influences
and the $h$-influences defined in \cite{keller09b}, combined with
isoperimetric estimates for the underlying probability
measures.

Using the same methods, we obtain analogs of Talagrand's
bound on the vector of influences \cite{Talagrand1} and of Friedgut's
theorem stating that a function with a low sum of influences essentially
depends on a few coordinates \cite{Friedgut1}.
\begin{Theorem}\label{ThmTalagrand-intro}
Consider the product spaces $\mathbb R^n$ endowed with the product
Gaussian measure $\mu^{\otimes n}$. For any Borel-measurable set $A
\subset\Real^n$, we have:
\begin{longlist}[(2)]
\item[(1)] if $\mu^{\otimes n}(A)=t$, then
\[
\sum_{i=1}^n \frac{\ig_i(A)}{\sqrt{-\log\ig_i(A)}} \ge
c_1 t(1-t);
\]

\item[(2)] if $A$ is monotone and $\sum_{i=1}^n \ig_i(A) \sqrt{-\log\ig
_i(A)}=s$,
then there exists a set $B \subset\mathbb{R}^n$ such that $1_B$ is
determined by at
most $\exp(c_2 s/\epsilon)$ coordinates and
$\mu^{\otimes n}(A \bigtriangleup B) \leq\epsilon$,
\end{longlist}
where $c_1$ and $c_2$ are universal constants.
\end{Theorem}

We also show that the geometric influences can be used
in Russo-type formulas for location families.
\begin{Proposition}\label{proprusso-location}
Let $\nu$ be a probability measure on $\mathbb{R}$ with continuous
density $\lambda$ and cumulative distribution function $\Lambda$.
Let $\{ \nu_{\alpha}\dvtx\alpha\in\mathbb{R} \}$ denote a~family
of probability measures which is obtained by translating $\nu$;
that is, $\nu_{\alpha}$ has a density $\lambda_\ga$ satisfying
$\lambda_\ga(x) = \lambda(x - \alpha)$.

Assume that $\lambda$ is bounded and satisfies $\lambda(z)
> 0$ on $(\kappa_L, \kappa_R)$, the interior of the support of
$\nu$. Let $A$ be a monotone increasing subset of $\mathbb{R}^n$.
Then the function $\alpha\rightarrow{\nu_{\alpha}}^{\otimes n}(
A)$ is differentiable, and its derivative is given by
\[
\frac{d {\nu_{\alpha}}^{\otimes n}( A)}{d \alpha} = \sum_{i=1}^n
\ig_i(A),
\]
where the influences are taken with respect to the measure
$\nu_\ga^{\otimes n}$.
\end{Proposition}

Theorem \ref{ThmOur-KKL} and
Proposition \ref{proprusso-location} can be combined to get the
following corollary which is the Gaussian analog of the sharp threshold
result obtained by Friedgut and Kalai
\cite{Friedgut-Kalai} for the product Bernoulli measure on the hypercube.
We call a set transitive if its characteristic function
is invariant under the action of some transitive subgroup of the
permutation group $S_n$, where~$S_n$ acts by permutation of the $n$
coordinates.
\begin{Corollary}\label{CorSharp-Threshold}
Let $\mu_{\ga}$ denote the Gaussian measure on the real line with
mean $\ga$ and variance $1$. Let $A \subset\mathbb{R}^n$ be a
monotone increasing transitive set. For any
$\delta>0$, denote\vadjust{\goodbreak} by $\alpha_A(\delta)$ the unique value of
$\alpha$ such that \mbox{$\mu_{\alpha}^{\otimes n}(A)=\delta$}. Then for
any $0<\epsilon<1/2$,
\[
\alpha_A(1-\epsilon)-\alpha_A(\epsilon) \leq c
\log(1/2\epsilon)/\sqrt{\log n},
\]
where $c$ is a universal constant.
\end{Corollary}

We use the geometric influences to obtain an isoperimetric result
for the Gaussian measure on $\mathbb{R}^n$.
\begin{Theorem}\label{thmisolowerbound}
Consider the product spaces $\mathbb R^n$ endowed with the product
Gaussian measure $\mu^{\otimes n}$. Then for any transitive
Borel-measurable set $A \subset\Real^n$
we have
\[
\liminf_{r \downarrow0} \frac{\mu^{\otimes n} (A+ [-r,r]^n) -
\mu^{\otimes n} (A)}{r} \geq ct(1-t) \sqrt{\log n},
\]
where $t=\mu^{\otimes n}(A)$ and $c>0$ is a universal constant.
\end{Theorem}

This result also extends to all Boltzmann measures.

Since the Gaussian measure is rotation invariant, it is natural to
consider the influence sum of rotations of sets.
Of particular interest are families of sets that are closed under
rotations. In
Section \ref{secrotation} we study the effect of \textit{rotations} on
the geometric influences, and show that under mild
regularity condition of being in a certain class $\jcal_n$ (see
Definition \ref{defjcal}), the sum of geometric influences of a
convex set can be
increased up to $\Omega(\sqrt{n})$ by (a~random) orthogonal rotation.
\begin{Theorem}\label{ThmRotation}
Consider the product Gaussian measure $\mu^{\otimes n}$ on
$\mathbb R^n$. For any convex set $A \in\jcal_n$ with
$\mu^{\otimes n}(A) = t$, we have
\[
\E_{M \sim\pi} \Biggl[ \sum_{i=1}^n \ig_i(M(A)) \Biggr] \geq
ct(1-t) \sqrt{-\log\bigl(t(1-t)\bigr)} \times\sqrt n,
\]
where $\E_{M \sim\pi}$ denotes the expectation when $M$ is drawn
according to the Haar measure $\pi$ over the
orthogonal group of rotations, and $c$ is a
universal positive constant. In particular, there exists an orthogonal
transformation $g$ on $\mathbb R^n$ such that
\[
\sum_{i=1}^n \ig_i(g(A)) \ge ct(1-t) \sqrt{-\log\bigl(t(1-t)\bigr)} \times
\sqrt n.
\]
\end{Theorem}

The results presented in this paper lead us to the questions regarding
the extension in several directions:
\begin{itemize}
\item\textit{Nonproduct measures.} The most challenging
direction of extending our results is to consider nonproduct
probability measures. The first problem in such generalization is
that it is not clear at all what is the natural definition of
influences for such measures. The second difficulty is that the
techniques used in KKL-type results rely quite heavily on\vadjust{\goodbreak}
properties of product measures, and it is not clear how they can
be extended to more general measures. We note that in a recent
paper \cite{Grimmett06}, Graham and Grimmett obtained a variant of
the KKL theorem for measures satisfying certain FKG lattice
conditions
using reduction to the uniform measure on the continuous cube
$[0,1]^n$. Their results hold both for FKG measures on $\{0,1\}^n$
and for FKG measures on $[0,1]^n$ which are absolutely continuous
with respect to the Lebesgue measure. However, it is not clear
whether the definition of influences they consider in the
continuous case has an ``interesting'' geometric interpretation.

\item\textit{Geometric meaning of nonmonotone sets.} The geometric
meaning of our
definition of the influences (i.e., the relation to the size of
the boundary with respect to uniform enlargement) holds for all
monotone sets (as shown in Proposition~\ref{luniformenlarge}),
and even for all convex sets, but not for general sets (see
Remark \ref{remconvex}). While restriction to monotone sets is
quite standard in the study of influences (since monotonization
arguments similar to Lemma \ref{theomono} show that it is
sufficient to prove all the lower bounds on influences in the case
of monotone sets), it is interesting to find out whether our
definition of influences has other geometric interpretation for general
sets. On the other hand, it is interesting to determine exactly the
families of sets to which Proposition \ref{luniformenlarge}
applies.

\item\textit{Other continuous measures.} The main results of our
paper apply to the Gaussian measure in $\mathbb{R}^n$, and more
generally, to the family of Boltzmann measures. Moreover, as shown
in Section \ref{secsubgeneral-product}, the results extend to a
broader class of measures whose isoperimetric function satisfies
some specific condition. It seems interesting to extend the
results to broader classes of measures, and on the other hand, to
determine measures for which such results cannot be obtained.
\end{itemize}
The paper is organized as follows: in
Section \ref{secGeometric-meaning} we prove Propositions
\ref{luniformenlarge} and \ref{proprusso-location},
thus establishing the geometric meaning of the new definition. In
Section \ref{secKKL} we discuss the relation between the
geometric influences and the $h$-influences, and prove
Theorem \ref{ThmOur-KKL}. In Section \ref{secIsoperimetry} we
apply Theorem \ref{ThmOur-KKL} to establish a lower bound on the
size of the boundary of transitive sets with respect to uniform
enlargement, proving Theorem \ref{thmisolowerbound}. Finally,
in Section \ref{secrotation} we study the effect of \textit{rotations}
on the geometric influences. We conclude the introduction
with a brief statistical application of
the results established here.

\subsection{A statistical application} Let $Z_1, Z_2, \ldots, Z_n$ be i.i.d.
$\mathrm{N}(\theta, 1)$. Suppose we want to test the hypothesis
$ H_0 \dvtx\theta= \theta_0$ vs. $H_1\dvtx\theta= \theta_1$
$(\theta_1> \theta_0)$ with level of significance at most $\beta$
(for some $0<\beta<1/2$).

The remarkable classical result by Neyman and Pearson
\cite{neyman33} says that the most powerful test for the above
problem is based on the sample average $\bar Z_n = n^{-1}
\sum_{i=1}^n Z_i $, and the critical region of the test is given
by \mbox{$\mathcal C_{\mathrm{mp}} = \{ \bar Z_n > K \}$} where the
constant $K$ is chosen is such that $\prob_{\theta_0}\{ \mathcal
C_{\mathrm{mp}} \} =\beta$. It can be easily checked that to
achieve power at least $1-\beta$ for this test, we need the
parameters $\theta_0$ and $\theta_1$ to be separated by at least
$| \theta_1 -\theta_0| > C(\beta)/ \sqrt{n}$ for some appropriate
constant $C(\beta)$.

Consider the following setup where the test statistics is given
by $f(Z_1, \ldots,\allowbreak Z_n)$ where $f\dvtx\mathbb R^n \to\mathbb R$ is a
measurable function which is nondegenerate, transitive and
monotone increasing in each of its coordinates. The transitivity
of~$f$ ensures equal weight is given to each data point while
constructing the test and the monotonicity of $f$ implies that the
distribution of $f$ depends on $\theta$ in a~monotone fashion.
Note that we do not assume any smoothness property of $f$. In
general the test statistics $f(Z_1, \ldots, Z_n)$, in contrast to
the sample average which is a~sufficient statistics for this
problem, may be a~result of an ``inefficient compression'' of the
data, and we have only access to the compressed data.

In this
case the critical region would be of the form \mbox{$\mathcal C\,{=}\,\{ f(Z_1,
\ldots, Z_n)\,{>}\,K\}$} where $K$ is chosen so that $\prob_{\theta_0} \{
\mathcal{C} \} = \beta$.

Note that regions $\mathcal{C} $ satisfy:
\begin{longlist}[(iii)]
\item[(i)] $\prob_{\theta_0} \{ \mathcal{C} \} = \beta$;
\item[(ii)] $\mathcal{C}$ is transitive;
%
\item[(iii)] $\mathcal{C}$ is an increasing set.
\end{longlist}



Clearly, the most powerful test belongs to this class, but, in
general, a test of the above type can be of much less power. An
interesting open question will be to find the worst test (i.e.,
having lowest power) among all tests satisfying (i), (ii) and
(iii). Intuitively if $\theta_1$ and $\theta_0$ are far apart,
even a very weak test can detect the difference between the null
and the alternative. Corollary \ref{CorSharp-Threshold} gives us
a quantitative estimate of how far apart the parameters need to be
so that we can safely distinguish them no matter what test we use.
Indeed any test satisfying (i), (ii) and (iii) still has power of at
least $1-\beta$ as long as $|\theta_1 - \theta_0| > c
\log(1/2\beta)/\sqrt{\log n}$ for some absolute constant $c$.

For the test $\{ \max_i Z_i > K\}$, the dependence on $n$ in the above
bound is tight up to constant factors.

We briefly note that the statistical reasoning introduced here may be combined
with Theorem 2.1 in \cite{Friedgut-Kalai}. Thus a similar statement
holds when~$Z_1,\allowbreak Z_2, \ldots, Z_n$ are i.i.d.
$\operatorname{Bernoulli}(p)$, and we want to test the hypothesis
$ H_0 \dvtx\allowbreak p = p_0$ vs. $H_1\dvtx p = p_1$ $(1> p_1> p_0> 0)$. In this case,
the power of any test satisfying (i), (ii) and (iii) is at least
$1-\beta$ as long as $|p_1 - p_0| > c \log(1/2\beta)/\log n$ for
some absolute constant~$c$.\vspace*{-3pt}

%
\section{Boundary under uniform enlargement and derivatives}
\label{secGeometric-meaning}
In this section we provide the geometric
interpretation of the influence. We begin by proving
Proposition \ref{luniformenlarge}.\vspace*{-2pt}

\subsection{\texorpdfstring{Proof of Proposition \protect\ref{luniformenlarge}}{Proof of Proposition 1.3}}

In our proof we use the following simple lemma:\vadjust{\goodbreak}
\begin{Lemma} \label{ldensitybound}
Let $\lambda$ be as given in Proposition \ref{luniformenlarge}.
Given $\eps> 0$, there exists a constant $C_\eps> 0$ such that
for all $x, y \in\Real$,
\[
|\lambda(y) - \lambda(x)| \le C_\eps| \gL(y) - \gL(x) | + \eps/4.
\]
\end{Lemma}
\begin{pf}
Since $\lim_{ |z| \rightarrow\infty} \lambda(z) = 0$, there
exists $z_0>0$ such that
\[
\sup_{|z| \ge z_0} \lambda(z) \le\eps/8.
\]
Fix $x,y \in\mathbb{R}$, and assume without loss of generality
that $x \leq y$. We have
%
%
\begin{eqnarray}\label{Neweq21}
|\lambda(y)-\lambda(x)| &\leq&\biggl| \int_x^{\max(x,-z_0)} \lambda'(z)\,dz
\biggr| + \biggl| \int_{\max(x,-z_0)}^{\min(z_0,y)} \lambda'(z)\,dz
\biggr| \nonumber\\[-8pt]\\[-8pt]
&&{}+ \biggl| \int_{\min(z_0,y)}^y \lambda'(z)\,dz \biggr|.\nonumber
\end{eqnarray}
By the choice of $z_0$,
%
%
\begin{equation}\label{Neweq22}
\biggl| \int_{\min(z_0,y)}^y \lambda'(z)\,dz \biggr| \leq
|\lambda(y)-\lambda(z_0)|\leq\eps/8,
\end{equation}
and similarly,
%
%
\begin{equation}\label{Neweq23}
\biggl| \int_x^{\max(x,-z_0)} \lambda'(z)\,dz \biggr| \leq\eps/8.
\end{equation}
On the other hand, since the function $\lambda'/ \lambda$ is continuous,
there exists $C_{\eps}$ such that $|\lambda'(z)|/ \lambda(z) \le
C_{\eps}$
for all $|z| \le z_0$. Hence,
%
%
\begin{eqnarray}\label{Neweq24}
\biggl| \int_{\max(x,-z_0)}^{\min(z_0,y)} \lambda'(z)\,dz \biggr| &\leq&
C_{\eps} \int_{\max(x,-z_0)}^{\min(z_0,y)} \lambda(z)\,dz\nonumber\\
&=& C_{\eps}
\bigl(\gL(\min(z_0,y))-\gL\bigl(\max(x,-z_0)\bigr) \bigr) \\
&\leq& C_{\eps} \bigl(\gL(y) - \gL(x)\bigr) = C_{\eps} |\gL(y) - \gL(x)|.
\nonumber
\end{eqnarray}
Substitution of (\ref{Neweq22}), (\ref{Neweq23})
and (\ref{Neweq24}) into (\ref{Neweq21}) yields the
assertion.
\end{pf}

Now we are ready to present the proof of
Proposition \ref{luniformenlarge}.
\begin{pf*}{Proof of Proposition \ref{luniformenlarge}}
Without loss of generality, assume that~$A$ is decreasing. Thus, $
\nu^{\otimes n} (A+ [-r, r]^n) = \nu^{\otimes n} (A+ [0, r]^n)$.
We decompose $\nu^{\otimes n} (A+ [0, r]^n) - \nu^{\otimes n} (A)$
as a telescoping sum
%
%
\begin{equation}\label{Eq211}
\sum_{ i=1}^n \nu^{\otimes n} ( A +[0, r]^{i}\times\{0\}^{n-i}) -
\nu^{\otimes n} ( A +[0, r]^{i-1}\times\{0\}^{n-i+1}).\hspace*{-15pt}
\end{equation}
It follows immediately from (\ref{Eq211}) that it is sufficient
to show that given $\eps
> 0$, there exists $\delta> 0$ such that for all $1 \leq i \leq n$ and
for all $0< r< \delta$,
%
%
\begin{eqnarray}\label{Eq212}
&&\biggl|\frac{\nu^{\otimes n} ( A +[0,r]^{i-1}\times[0, r]\times
\{0\}^{n-i}) - \nu^{\otimes n} ( A +[0, r]^{i-1}\times
\{0\}^{n-i+1})}{r}\nonumber\hspace*{-30pt}\\
&&\hspace*{274pt}{} - \ig_i(A) \biggr|\hspace*{-30pt} \\
&&\qquad\le\eps.\nonumber\hspace*{-30pt}
\end{eqnarray}
For a fixed $i$, define
\[
B_r^i = A +[0,r]^{i-1}\times\{0\}^{n-i+1}.
\]
Obviously, $B_r^i$ is a decreasing set. Note that $ A
+[0,r]^{i-1}\times[0, r]\times\{0\}^{n-i} = B_r^i +
\{0\}^{i-1}\times[0, r]\times\{0\}^{n-i}$. Hence,
equation (\ref{Eq212}) can be rewritten as
%
%
\begin{equation}\label{Eq213}
\biggl|\frac{\nu^{\otimes n} (B_r^i + \{0\}^{i-1}\times[0,
r]\times\{0\}^{n-i}) - \nu^{\otimes n} (B_r^i)}{r} - \ig_i(A)
\biggr| \le\eps.\hspace*{-15pt}
\end{equation}
For any decreasing set $D \subset\Real^n$ and for any
$x \in\Real^n$, define
\[
t_i(D;x) := \sup\{ y \dvtx y \in D^x_i\} \in[- \infty, \infty]
\]
with the convention that the supremum of the empty set is $-\infty$.
We use two simple observations:
\begin{longlist}[(2)]
\item[(1)] For any decreasing set $D$ (and in particular, for $A$ and
for $B_r^i$), it is clear that $\nu^{\otimes n} (D) = \E_x
\gL(t_i(D;x))$.

\item[(2)] For a decreasing set $D$, we have $\ig_i(D)= \E_x
\lambda(t_i(D;x))$. This follows from a known property of the lower
Minkowski content: in the case when $\nu$ has a continuous density
$\lambda$, and $L$ is a semi-infinite ray, that is, $L = [\ell,
\infty)$ or $L = (-\infty, \ell]$, we have $\nu^{+}(L) =
\lambda(\ell)$.
\end{longlist}
We further observe that
%
%
\begin{eqnarray}\label{Eq214}
&&\biggl| \frac{\nu^{\otimes n} (B_r^i + \{0\}^{i-1}\times[0,
r]\times\{0\}^{n-i}) -\nu^{\otimes n} ( B_r^i)}{r} - \E_x
\lambda(t_i(B_r^i;x)) \biggr| \nonumber\hspace*{-30pt}\\[-8pt]\\[-8pt]
&&\qquad\leq r \|\lambda'\|_{\infty}.\nonumber\hspace*{-30pt}
\end{eqnarray}
Indeed, by observation (1), the left-hand side of (\ref{Eq214}) is equal
to
%
%
\begin{equation}\label{Eq215}
\biggl| \E_x \biggl[\frac{\gL(t_i(B_r^i;x)+r) - \gL(t_i(B_r^i;x))}{r}
- \lambda(t_i(B_r^i;x)) \biggr] \biggr|.
\end{equation}
By the mean value theorem, there exists $h \in[0,r]$ such that
\[
\frac{\gL(t_i(B_r^i;x)+r) - \gL(t_i(B_r^i;x))}{r} =
\lambda\bigl(t_i(B_r^i;x)+h\bigr),
\]
and thus
\[
\mbox{(\ref{Eq215})} = \bigl| \E_x
\bigl[\lambda\bigl(t_i(B_r^i;x)+h\bigr)-\lambda(t_i(B_r^i;x))\bigr] \bigr| \leq r
\|\lambda'\|_{\infty}.\vadjust{\goodbreak}
\]
%

Combining (\ref{Eq213}) and (\ref{Eq214}), and
ensuring that $r<\eps/(2\|\lambda'\|_{\infty})$, it is sufficient to
show that
\[
| \E_x \lambda(t_i(B_r^i;x)) - \ig_i(A) | \leq\eps/2,
\]
and by observation (2), this is equivalent to
%
%
\begin{equation}
| \E_x \lambda(t_i(B_r^i;x)) - \E_x \lambda(t_i(A;x)) | \leq
\eps/2.
\end{equation}
By Lemma \ref{ldensitybound} and observation (1), we have
\begin{eqnarray*}
| \E_x \lambda(t_i(B_r^i;x)) - \E_x \lambda(t_i(A;x)) | &\le& C_\eps
\E_x |
\gL(t_i(B_r^i;x)) - \gL(t_i(A;x))| +\eps/4 \\
&=& C_\eps\E_x \bigl( \gL(t_i(B_r^i;x)) - \gL(t_i(A;x)) \bigr) +\eps/4 \\
&=& C_\eps\bigl({\nu^{\otimes n} }(B_r^i) - {\nu^{\otimes n} }(A)\bigr)
+\eps/4.
\end{eqnarray*}
It thus remains to show that there exists $\delta> 0$
sufficiently small such that for all $ 0 < r < \delta$,
%
%
\begin{equation}\label{Eq216}
{\nu^{\otimes n} }(B_r^i) - {\nu^{\otimes n} }(A) \leq
\frac{\eps}{4C_\eps}.
\end{equation}
We can write
\begin{eqnarray*}
&&{\nu^{\otimes n} }(B_r^i) - {\nu^{\otimes n} }(A) \\
&&\qquad=
\sum_{j=1}^{i-1} \bigl({\nu^{\otimes n} }(A+[0,r]^j \times
\{0\}^{n-j}) - {\nu^{\otimes n} }(A+[0,r]^{j-1} \times
\{0\}^{n-j+1}) \bigr),
\end{eqnarray*}
and thus it is sufficient to find $\delta> 0$ such that for all
$0< r< \delta$ and for all $1 \leq j \leq i-1$,
\[
{\nu^{\otimes n} }(A+[0,r]^j \times\{0\}^{n-j}) - {\nu^{\otimes
n} }(A+[0,r]^{j-1} \times\{0\}^{n-j+1}) \le
\frac{\eps}{4nC_\eps}.
\]
Since for any decreasing $D \subset\Real^n$,
\[
|{\nu^{\otimes n} } (D+ \{0\}^{j-1}\times[0,r] \times\{0\}^{n-j}) -
{\nu^{\otimes n} }(D)| \le\|\lambda\|_\infty r,
\]
we can choose $ \delta= \min\{\frac{\eps}{4nC_\eps
\|\lambda\|_\infty},\frac{\eps}{2\|\lambda'\|_{\infty}}\}$. This completes
the proof.
\end{pf*}
\begin{remark}\label{remconvex}
We note that the same proof (with minor modifications) holds for
any convex set $A$. The only nonobvious change is noting that the
Minkowski content of a segment $[a,b]$ is $\nu^{+}([a,b]) =
\lambda(a)+\lambda(b)$, where $\lambda$ is the density of the
measure $\nu$.
On the other hand, it is clear that the statement of
Proposition~\ref{luniformenlarge} does not hold for general
measurable sets. For example, if $A=\mathbb{Q}^n$ where
$\mathbb{Q}$ is the set of rational numbers, then the size of the
boundary of $A$ with respect to a uniform enlargement is $\infty$,
while the sum of geometric influences of $A$ is zero. It seems an
interesting question to determine to which classes of measurable
sets Proposition \ref{luniformenlarge} applies.
\end{remark}




\subsection{\texorpdfstring{Proof of Proposition \protect\ref{proprusso-location}}{Proof of Proposition 1.6}}
Define a function
$\Pi\dvtx\Real^n \to[0, \infty)$ by
\[
\Pi(\ga_1, \ldots, \ga_n) =
\nu_{\ga_1} \otimes\cdots\otimes\nu_{\ga_n} (A).\vadjust{\goodbreak}
\]
The partial derivative of $\Pi$ with respect to the $i$th
coordinate can be written as
%
%
\begin{equation}\label{Eq221}
\frac{\partial\Pi(\ga_1, \ldots, \ga_n )}{\partial\ga_i} =
\lim_{r \downarrow0} \frac{\E_x \nu_{\ga_i+r} (A^x_i) - \E_x
\nu_{\ga_i} (A^x_i)}{r}.
\end{equation}
For $x \in\Real^n$, define
\[
s_i(A;x) := \inf\{ y \dvtx y \in A^x_i\} \in[- \infty, \infty].
\]
Since $A$ is monotone increasing, for any $x \in\Real^n$ we have
%
%
\begin{eqnarray}\label{Eq222}\qquad
\frac{\nu_{\ga_i+r} (A^x_i) - \nu_{\ga_i} (A^x_i)}{r} &=& \frac{
\nu_{\ga_i+r}( [s_i(A;x), \infty) ) - \nu_{\ga_i}( [s_i(A;x),
\infty) ) }{r} \nonumber\\[-8pt]\\[-8pt]
&=& \frac1r \int_{s_i(A;x)-r}^{s_i(A;x)}
\lambda_{\ga_i}(z)\,dz,\nonumber
\end{eqnarray}
and by the fundamental theorem of calculus, this expression
converges to $\lambda_{\ga_i}(s_i(A;x))$ as $ r \to0$. Moreover,
(\ref{Eq222}) is uniformly bounded by $\| \lambda_{\ga_i}\|_\infty=
\| \lambda\|_\infty$ (which is finite since $\lambda$ is bounded by the
hypothesis). Therefore, by the dominated convergence theorem, it
follows that the first-order partial derivatives of $\Pi$ exist
and are given by
\[
\frac{\partial\Pi(\ga_1, \ldots, \ga_n )}{\partial\ga_i} = \E_{
x \sim\nu_{\ga_1} \otimes\cdots\otimes\nu_{\ga_n}}
\lambda_{\ga_i}(s_i(A;x)) = \ig_i(A),
\]
where the influence is with respect to the measure $\nu_{\ga_1}
\otimes\cdots\otimes\nu_{\ga_n}$. [For the last equality, see
observation (2) in the proof of
Proposition \ref{luniformenlarge} above. Here we use the
convention that $ \lambda_{\ga_i}(-\infty) = \lambda_{\ga
_i}(\infty) =
0$.]

Hence, by the chain rule, it is sufficient to check that all the
partial derivatives of $\Pi$ are continuous at $(\ga, \ldots,
\ga)$. Without loss of generality, we assume that $\ga=0$. Note
that
%
%
\begin{equation} \label{eqrussofirst}
\E_{ x \sim\nu_{\ga_1} \otimes\cdots\otimes\nu_{\ga_n}}
\lambda_{\ga_i}(s_i(A;x))
= \E_{ x \sim\nu\otimes\cdots\otimes\nu}
\Biggl(\prod_{j=1}^n \frac{\lambda_{\ga_j}(x_j)}{\lambda(x_j)}\Biggr) \lambda
_{\ga_i}(s_i(A;x)).\hspace*{-40pt}
\end{equation}
For each $x \in\Real^n$,
%
%
\begin{equation} \label{Eq224}
\prod_{j=1}^n \frac{\lambda_{\ga_j}(x_j)}{\lambda(x_j)}
\lambda_{\ga_i}(s_i(A;x)) \to\prod_{j=1}^n \lambda(s_i(A;x))
\end{equation}
as $\max{|\ga_i|} \to0$. Hence, the continuity of the partial
derivatives would follow from the dominated convergence theorem
if (\ref{Eq224}) was uniformly bounded. In order to obtain such
bound, we consider a compact subset.

There exist $\kappa_L < K_L < K_R < \kappa_R$ and $\gd> 0$ such
that $\nu( [K_L+ \gd, K_R- \gd]) \ge1 - \eps$. Let $c:= \min_{ z
\in[K_L, K_R] } \lambda(z)$. Note that by the hypothesis on $\lambda
$, we
have $c>0$. If $|\ga_j| \le\gd$ for all $j$, then
%
%
\begin{eqnarray} \label{Eq225}
&&\Biggl|\mbox{(\ref{eqrussofirst})} - \E_{ x \sim\nu\otimes\cdots
\otimes\nu} \Biggl(\prod_{j=1}^n \frac{\lambda_{\ga_j}(x_j)}{\lambda(x_j)}
1_{ \{K_L \le x_j \le K_R\} }\Biggr) \lambda_{\ga_i}(s_i(A;x))
\Biggr|\nonumber\hspace*{-30pt}\\[-8pt]\\[-8pt]
&&\qquad
\le\eps\cdot n \cdot\| \lambda\|_\infty.\nonumber\hspace*{-30pt}
\end{eqnarray}
Indeed, denoting $S=\{x \in\Real^n\dvtx\exists j, x_j \notin[K_L,
K_R]\}$ and using (\ref{eqrussofirst}), we have
\[
\mbox{(\ref{Eq225})}\!=\!| \E_{ x \sim\nu_{\ga_1} \otimes\cdots
\otimes\nu_{\ga_n}} 1_S \lambda_{\ga_i}(s_i(A;x)) |\!\leq\!
\|\lambda\|_{\infty} \E_{ x \sim\nu_{\ga_1} \otimes\cdots
\otimes
\nu_{\ga_n}} 1_S\!\leq\!\eps\cdot n \cdot\| \lambda\|_\infty,
\]
where the last inequality is a union bound using the choice of
$K_L$ and $K_R$.

Similarly, by a union bound we have
%
%
\begin{eqnarray} \label{Eq226}
&&\bigl| \E_{ x \sim\nu\otimes\cdots\otimes\nu} \lambda(s_i(A;x))
- \E_{ x \sim\nu\otimes\cdots\otimes\nu} 1_{ \{K_L \le x_j
\le K_R \ \forall j\} }\lambda(s_i(A;x))\bigr| \nonumber\hspace*{-30pt}\\[-8pt]\\[-8pt]
&&\qquad\le\eps\cdot n \cdot
\| \lambda\|_\infty.\nonumber\hspace*{-30pt}
\end{eqnarray}
Combining (\ref{Eq225}) with (\ref{Eq226}), it is sufficient
to prove that
\begin{eqnarray*}
&&\E_{ x \sim\nu\otimes\cdots\otimes\nu} \prod_{j=1}^n
\frac{\lambda_{\ga_j}(x_j)}{\lambda(x_j)} 1_{ \{K_L \le x_j \le
K_R\} }
\lambda_{\ga_i}(s_i(A;x))
\\
&&\qquad\to\E_{ x \sim\nu\otimes\cdots\otimes
\nu} \prod_{j=1}^n 1_{ \{K_L \le x_j \le K_R\} } \lambda(s_i(A;x)).
\end{eqnarray*}
This indeed follows from the dominated convergence theorem, since
for each \mbox{$x \in\Real^n$},
\[
\prod_{j=1}^n \frac{\lambda_{\ga_j}(x_j)}{\lambda(x_j)}
1_{ \{K_L \le x_j \le K_R\} } \lambda_{\ga_i}(s_i(A;x))
\to
\prod_{j=1}^n 1_{ \{K_L \le x_j \le K_R\} } \lambda(s_i(A;x))
\]
as $\max{|\ga_i|} \to0$ and is uniformly bounded by $c^{-n}
\|\lambda\|^{n+1}_\infty$. This completes the proof.

\section{Relation to $h$-influences and a general lower bound on geometric influences}
\label{secKKL}

In this section we analyze the geometric influences by reduction
to problems concerning $h$-influences introduced in a recent paper
by the first author \cite{keller09b}. First we describe and extend
the results on $h$-influences, and then we show their relation to
geometric influences.

\subsection{$h$-influences}

\begin{definition}
Let $h\dvtx[0,1] \to[0,\infty)$ be a measurable function. For a
measurable subset $A$ of $X^n$ equipped with a product measure
$\nu^{\otimes n}$, the $h$-influence of the $i$th coordinate on
$A$ is
\[
I^h_i(A) := \E_{ x} [h(\nu( A^x_i ))].
\]
\end{definition}

The two main results concerning $h$-influences are a
monotonization lemma and an analog of the KKL theorem.
\begin{Lemma}[(\cite{keller09b})] \label{theomono}
Consider the space $[0,1]^n$, endowed with the product Lebesgue
measure $u^{\otimes n}$. Let $h\dvtx[0, 1] \to[0, 1]$ be a concave
continuous function. For every Borel measurable set $A \subseteq
[0,1]^n$, there exists a monotone increasing set $B \subseteq
[0,1]^n$ such that:
\begin{longlist}[(2)]
\item[(1)] $u^{\otimes n}(A) = u^{\otimes n}(B)$;
\item[(2)] for
all $1\le i \le n$, we have $I^h_i(A) \ge I^h_i(B)$.
\end{longlist}
\end{Lemma}
\begin{Theorem}[(\cite{keller09b})]\label{theohinf}
$\!\!$Denote the entropy function as $\Ent(x) := -x \log x -(1-x) \log
(1-x)$ for all $0<x<1$, and $\Ent(0)=\Ent(1)=0$. Consider the
space $[0,1]^n$, endowed with the product Lebesgue measure
$u^{\otimes n}$. Let $h\dvtx[0,1] \to[0,1]$ such that $h(x) \ge
\Ent(x)$ for all $0 \le x\le1$. Then for every measurable set $ A
\subseteq[0,1]^n$ with $u^{\otimes n}(A) = t$, there exists $1
\le i \le n$ such that the $h$-influence of the $i$th coordinate
on $A$ satisfies
\[
I^h_i(A) \ge ct(1-t) \log n/ n,
\]
where $c>0$ is a universal constant.
\end{Theorem}

Other results on $h$-influences which we shall use later include
analogs of several theorems concerning influences on the
discrete cube: Talagrand's lower bound on the vector of
influences \cite{Talagrand1}, a variant of the KKL theorem for
functions with low influences \cite{Friedgut-Kalai} and
Friedgut's theorem asserting that a~function with a~low influence
sum essentially depends on a few coordinates~\cite{Friedgut1}.

In the application to geometric influences we would like to use
$h$-influences for certain functions $h$ that do not dominate the
entropy function. In order to overcome this problem, we use the
following lemma that allows to relate general $h$-influences to
the entropy-influence [i.e., $h$-influence for $h(x)=\Ent(x)$].

\begin{Lemma}\label{theomain}
Consider the product space $(\mathbb R^n, \nu^{\otimes n})$, where
$\nu$ has a continuous cumulative distribution function $\gL$.
Let $h\dvtx[0,1] \to[0, \infty)$, and let $A \subseteq\Real^n$ be a
Borel-measurable set. For all $1 \leq i \leq n$,
%
%
\begin{equation}\label{EqEntropy-General}
I^h_i(A) \geq\tfrac{1}{2} \delta\cdot I^{\Ent}_i(A),
\end{equation}
where
%
%
\begin{equation} \label{eqratio}
\gd=\gd(A,i) =\inf_{x \in[\vartheta(I^{\Ent}_i(A)/2), 1 -
\vartheta(I^{\Ent}_i(A)/2)]} \frac{h(x)}{ \Ent(x)}
\end{equation}
and $\vartheta(y) = y/(-2 \log y)$.
\end{Lemma}

\begin{pf}
Set $f = 1_A$. Let $u$ be the Lebesgue measure on $[0,1]$.
Define $ g(x_1, \ldots, x_n) := f(\gL^{-1}(x_1), \ldots,
\gL^{-1}(x_n))$ and write $B$ for\vadjust{\goodbreak} the set $\{x \in\Real^n\dvtx\allowbreak g(x)
=1\}$. Since $\gL^{-1}(u) \stackrel{d}{=} \nu$, the set $B$
satisfies $u^{\otimes n}(B) = \nu^{\otimes n} (A) = t$ and
\[
I^h_i(B)|_{u^{\otimes n}} = I^h_i(A) |_{\nu^{\otimes n}}
\qquad\mbox{for each } 1 \le i \le n.
\]
Denote by $\alpha$ the unique value in the segment
$[0,1/2]$ which satisfies the equation
$\alpha=\Ent^{-1}(I^{\Ent}_i(A)/2)$. It is clear that for any $x
\notin[\alpha,1-\alpha]$,
\[
\Ent(x) \leq\Ent\bigl(\Ent^{-1}\bigl(I^{\Ent}_i(A)/2\bigr) \bigr) =
I^{\Ent}_i(A)/2,
\]
and thus,
\begin{eqnarray*}
\E_{x} \bigl[\Ent( u( B^x_i) ) 1_{\{ u(B^x_i) \in
[\alpha,1-\alpha]\} } \bigr] &=& I^{\Ent}_i(B)|_{u^{\otimes n}}
- \E_{x} \bigl[ \Ent( u(
B^x_i) ) 1_{\{ u(B^x_i) \notin[\alpha,1-\alpha] \} } \bigr] \\
&\geq& I^{\Ent}_i(A)/2.
\end{eqnarray*}
Therefore, by (\ref{eqratio}),
\begin{eqnarray*}
I^h_i(A) |_{\nu^{\otimes n}} &=& I^h_i(B)|_{u^{\otimes n}}
\ge\E_{x} \bigl[ h ( u( B^x_i) ) 1_{\{ u(B^x_i) \in
[\alpha,1-\alpha] \} } \bigr]\\
&\geq& \biggl(\inf_{x \in[\Ent^{-1}(I^{\Ent}_i(A)/2), 1 -
\Ent^{-1}(I^{\Ent}_i(A)/2)]} \frac{h(x)}{ \Ent(x)} \biggr) I^{\Ent
}_i(A)/2\\
&\ge&\delta\cdot I^{\Ent}_i(A)/2,
\end{eqnarray*}
where the last step follows from the fact that $\vartheta(x) \le\Ent
^{-1}(x)$ for $x \le1/2$ which is easy to verify.
\end{pf}



\subsection{Relation between geometric influences and $h$-influences for log-concave measures}

It is straightforward to check the following relation between the
geometric influences and the $h$-influences for monotone sets. The
proof follows immediately from observation (2) in the proof of
Proposition \ref{luniformenlarge}.
\begin{Lemma}\label{lemhgeoinflu}
Consider the product space $(\mathbb R^n, \nu^{\otimes n})$ where
$\nu$ has a continuous density $\lambda$. Let $\gL$ denote the
cumulative distribution function of $\nu$. Then for any monotone
set $A \subseteq\Real^n$,
\[
\ig_i(A) = I^h_i(A) \qquad\forall1 \le i \le n,
\]
where $h(t) = \lambda( \gL^{-1}(t))$ when $A$ is decreasing and
$h(t)= \lambda( \gL^{-1}(1-t))$ when~$A$ is increasing. Here $\gL
^{-1}$ denotes the unique inverse of the function $\gL$.
\end{Lemma}

Using Lemmas \ref{theomono} and \ref{lemhgeoinflu}, we
can obtain a monotonization lemma for geometric influences that
holds if the underlying measure has a log-concave density. In
order to show this, we use the following isoperimetric inequality
satisfied by log-concave distributions (see, e.g.,
\cite{bobkov96}).
\begin{Theorem}[(\cite{bobkov96})]\label{ThmIso-Bobkov}
Let $\nu$ have a log-concave density $\lambda$, and let $\gL$ be the
corresponding cumulative distribution function. Denote the
(unique) inverse of the function $\gL$ by $\gL^{-1}$. Fix any\vadjust{\goodbreak} $t
\in(0,1)$; that is,
for $t \in(0, 1)$ and for every Borel-measurable set $A \subseteq
R$ with $\nu(A)=t$,
%
%
\begin{eqnarray}\label{inebobkov}
\nu(A+[-r, r]) \ge\min\bigl\{ \gL\bigl( \gL^{-1}(t) + r \bigr), 1- \gL\bigl(
\gL^{-1}(1-t) - r \bigr) \bigr\}\nonumber\\[-8pt]\\[-8pt]
&&\eqntext{\forall r >0.}
\end{eqnarray}
Moreover, in the class of all Borel-measurable sets of
$\nu$-measure $t$, the extremal sets, that is, the sets for which
(\ref{inebobkov}) holds as an equality, are intervals of the form
$(-\infty, a]$ or $[a, \infty)$ for some $a \in\Real$.

If $\lambda$ is symmetric (around the median), then inequality (\ref
{inebobkov}) is simplified to
%
%
\begin{equation}\label{eqisolog-concave}
\nu(A+[-r, r]) \ge\gL\bigl( \gL^{-1}(t) + r \bigr) \qquad\forall r >0.
\end{equation}
\end{Theorem}

Now we are ready to present the monotonization lemma.
\begin{Lemma}\label{lemincreasingequality}
Consider the product measure $\nu^{\otimes n}$ on $\Real^n$ where
$\nu$ is a~probability distribution with a continuous symmetric
log-concave density~$\lambda$ satisfying $\lim_{ |z| \rightarrow
\infty} \lambda(z) = 0$. Then for any Borel set $A \subset
\Real^n$:
\begin{longlist}[(ii)]
\item[(i)] $\ig_i(A) \ge I_i^h(A)$ for all $1 \le i \le n$,
where $h(t) =\lambda( \gL^{-1}(t))$;

\item[(ii)] there exists an increasing set $B$ such that
$\nu^{\otimes n}(B)= \nu^{\otimes n}(A)$ and
\[
\ig_i(B) \le\ig_i(A) \qquad\mbox{for all } 1 \le i \le n.
\]
\end{longlist}
\end{Lemma}
\begin{pf}
Let $\gL$ be the cumulative distribution of $\nu$. Fix $x \in
\mathbb R^n$. By Theorem~\ref{ThmIso-Bobkov}, we have, for all $r
> 0$,
\[
\frac{ \nu(A^x_i+[-r, r]) - \nu(A^x_i)}{r} \ge\frac{\gL(
\gL^{-1}( \nu(A^x_i)) + r ) - \gL(\gL^{-1}(\nu(A^x_i)))}{r}.
\]
Taking limit of the both sides as $r \downarrow0$, we obtain
\[
\nu^{+}(A^x_i) \ge\lambda(\gL^{-1}(\nu(A^x_i)) ) = h(\nu(A^x_i)),
\]
which implies the first part of the lemma.

For a proof of the second part, we start by noting that the
assumptions on~$\nu$ imply that $h$ is concave and continuous.
Thus we can invoke Lemma \ref{theomono} to find an increasing
set $B$ such that $\nu^{\otimes n}(B)= \nu^{\otimes n}(A)$ and $
I^h_i(B) \le I^h_i(A)$ for all $1 \le i \le n$. By the first part
of the lemma, $I_i^h(A) \le\ig_i(A)$ for all $1 \le i \le n$. On
the other hand, it follows from Lemma \ref{lemhgeoinflu} that
$\ig_i(B) = I^h_i(B)$ for all $1 \le i \le n$. Hence,
\[
\ig_i(B) = I^h_i(B) \le I^h_i(A) \le\ig_i(A)
\]
as asserted.
\end{pf}

To keep our exposition simple, we will restrict our attention to
an important family of log-concave distributions known as
Boltzmann measures for the rest of the section. We mention in
passing that some of the techniques that we are going to develop
can be applied to other log-concave measures with suitable
isoperimetric properties.

\subsection{Lower bounds on geometric influences for Boltzmann measures}

%
\begin{definition}[(Boltzmann measure)] \label{defboltzmann}
The density of the Boltzmann measure $\mu_{\rho}$ with parameter
$\rho\ge1$ is given by
\[
\phi_{\rho}(x) := \frac{1}{2 \Gamma(1+1/\rho)} e^{ - |x|^{\rho
}}\,dx,\qquad x \in\mathbb R.
\]
\end{definition}

Note that $\rho=2$ corresponds to the Gaussian measure with
variance $1/2$ while $\rho=1$ gives the two-sided exponential
measure.

We have the following estimates on the tail probability of Boltzmann
measures.
\begin{Lemma} \label{lemboltzmanntail} Let $\Phi_\rho$ denote the
cumulative distribution function of the Boltzmann distribution
with parameter $\rho$. Then for $ z> 0$, we have
%
\[
\frac{1}{2 \rho\Gamma(1+1/\rho)} \biggl( \frac{z}{({\rho-1})/{\rho
} +z^\rho}\biggr)e^{ - z^{\rho}} \le1- \Phi_\rho(z) \le\frac{1}{2\rho
\Gamma(1+1/\rho)} \frac{1}{ z^{\rho-1}} e^{ - z^{\rho}}.
\]
In particular,
%
%
\begin{equation} \label{eqhest}
\phi_\rho(\Phi_\rho^{-1}(x)) \asymp x(1-x) \bigl(- {\log}\bigl( x(1-x)\bigr)
\bigr)^{(\rho-1)/\rho}
\end{equation}
for $x$ close to zero or one.
\end{Lemma}
\begin{pf} Set $Z_\rho= 2 \Gamma(1+1/\rho)$. For the upper bound,
note that
\[
Z_\rho\bigl(1- \Phi_\rho(z)\bigr) = \int_z^\infty e^{-t^\rho} \,dt \le\frac
{1}{\rho z^{\rho-1} } \int_z^\infty\rho t ^{\rho-1} e^{-t^\rho} \,dt
\le\frac{1}{ \rho z^{\rho-1} } e^{ - z^\rho}.
\]
On the other hand, the lower bound is derived as follows:
\begin{eqnarray*}
Z_\rho\biggl(1+ \frac{\rho-1}{\rho z^\rho} \biggr) \bigl(1- \Phi_\rho(z)\bigr) &\ge&
\int_z^\infty\biggl(1+ \frac{\rho-1}{\rho t^\rho} \biggr) e^{-t^\rho} \,dt \\
&=& -
\frac{e^{-t^\rho} }{\rho t^{\rho-1}} \bigg|_{z}^\infty
= \frac{e^{-z^\rho} }{\rho z^{\rho-1}}.
\end{eqnarray*}
\upqed\end{pf}

It follows from Lemma \ref{lemincreasingequality}(i) and Lemma
\ref{lemboltzmanntail} that for Boltzmann measures, the
geometric influences lie between previously studied
$h$-influences. On the one hand, they are greater than
variance-influences [i.e., $h$-influences with $h(t)=t(1-t)$],
that were studied in, for example, \cite{hatami09,mossel09}. On the
other hand, for monotone sets they are smaller than the
entropy-influences.

It is well known that there is no analog of the KKL influence
bound for the variance-influence, and a tight lower bound on the
maximal variance-influence is the trivial bound
\[
\max_{1 \leq i \leq n} I_i^{\Var}(A) \geq ct(1-t)/n,
\]
where $t$ is the measure of the set $A$. This inequality is an
immediate corollary of the Efron--Stein inequality (see,
e.g., \cite{steele86}),\vadjust{\goodbreak} and the tightness is shown by the standard
example of one-sided boxes (considered in
Section~\ref{secsubtightness} below). On the other hand, the
analog of the KKL bound proved in \cite{keller09b} holds only
for $h$-influences with $h(t) \geq\Ent(t)$. In order to show
KKL-type lower bounds for geometric influences, we use the
following two results.

The first result is a dimension-free isoperimetric inequality for
the Boltzmann measures.
\begin{Lemma}[(\cite{barthe04})]\label{lembarthe}
Fix $\rho>1$, and let $\mu_\rho$ denote the Boltzmann measure with
parameter $\rho$. Then there exists a constant $k = k(\rho)>0$
such that for any $n \ge1$ and any measurable $A \in\mathbb
R^n$, we have
\[
\mu_\rho^{\otimes n} (A + [-r, r]^n) \geq
\mu_\rho\bigl\{ \bigl(-\infty, \Phi_\rho^{-1}(t)+k r\bigr]\bigr\},
\qquad t=\mu_\rho^{\otimes n} (A).
\]
\end{Lemma}

The second key ingredient is a simple corollary of
Lemma \ref{theomain}.
\begin{Lemma}\label{Lemmain}
Consider the product spaces $(\mathbb R^n, \mu_\rho^{\otimes n})$,
where $\mu_\rho$ denotes the Boltzmann measure with parameter
$\rho> 1$. For any $A \subset\mathbb R^n$ and for all $1 \leq i
\leq n$,
\[
\ig_i(A) \geq c I^{\Ent}_i(A) (-{\log}(I^{\Ent}_i(A))
)^{-1/\rho},
\]
where $c = c(\rho)>0$ is a universal constant.
\end{Lemma}
\begin{pf}
In view of Lemma \ref{lemincreasingequality}, it is sufficient
to prove that
\[
I^h_i(A) \geq c I^{\Ent}_i(A) (-{\log}(I^{\Ent}_i(A))
)^{-1/\rho}
\]
for $h(x) := \phi_\rho(\Phi_\rho^{-1}(x))$. This indeed follows
immediately from Lemma \ref{theomain} using the estimate on
$h(x)$ given in (\ref{eqhest}).
\end{pf}

Now we are ready to prove the KKL-type lower bounds. We start with
an analog of the KKL theorem \cite{kahn88}.
\begin{Theorem}\label{corlogcocavehinf}
Consider the product spaces $(\mathbb R^n, \mu_\rho^{\otimes n})$,
where $\mu_\rho$ denotes the Boltzmann measure with parameter
$\rho> 1$. There exists a constant $c =c(\rho)> 0$ such that for
all $n \ge1$ and for any Borel-measurable set $A \subset\Real^n$
with \mbox{$\nu^{\otimes n}(A)=t$}, we have
\[
\max_{1 \le i \le n} \ig_i(A) \ge ct(1-t) \frac{ (\log n)^{ 1-
1/\rho}}{n}.
\]
\end{Theorem}
\begin{pf}
The proof is divided into two cases, according to $\nu^{\otimes
n}(A)=t$. If $t(1-t)$ is not very small, the proof uses
Lemmas \ref{lemincreasingequality} and \ref{Lemmain}. If
$t(1-t)$ is very small, the proof relies on
Lemmas \ref{lemincreasingequality} and \ref{lembarthe}.


\textit{Case} A: $t(1-t) > n^{-1}$. By
Theorem \ref{theohinf}, there exists $1 \leq i \leq n$, such
that
\[
I^{\Ent}_i(A) \geq ct(1-t) \frac{\log n}{n}.\vadjust{\goodbreak}
\]
Since $t(1-t)>1/n$, it follows from Lemma \ref{Lemmain} that
\[
\ig_i(A) \geq c I^{\Ent}_i(A) (-{\log}(I^{\Ent}_i(A))
)^{-1/\rho} \geq c't(1-t) \frac{\log n}{n} \cdot(\log
n)^{-1/\rho},
\]
where $c'$ is a universal constant, as asserted.

\textit{Case} B: $t(1-t) \le n^{-1}$. In view of
Lemma \ref{lemincreasingequality}, we can assume without loss of
generality that the set $A$ is increasing. In that case, by
Proposition~\ref{luniformenlarge}, we have
\[
\sum_{i=1}^n \ig_i(A) = \liminf_{ r \downarrow
0}\frac{\mu_\rho^{\otimes n} (A + [-r, r]^n) - \mu_\rho^{\otimes
n} (A ) }{r}.
\]
By Lemma \ref{lembarthe},
%
%
\begin{equation}\label{inqbarthe}
\liminf_{ r \downarrow0}\frac{\mu_\rho^{\otimes n} (A + [-r,
r]^n) - \mu_\rho^{\otimes n} (A ) }{r} \ge k
\phi_\rho(\Phi_\rho^{-1}(t)).
\end{equation}
Since in this case $t(1-t) \leq n^{-1}$, it follows from
Lemma \ref{lemboltzmanntail} that
\[
\sum_{i=1}^n \ig_i(A) \ge k \phi_\rho(\Phi_\rho^{-1}(t)) \ge k'
t(1-t) (\log n)^{(\rho-1)/\rho}
\]
for some constant $k'(\rho)>0$. This completes the proof.
\end{pf}

Theorem \ref{ThmOur-KKL} is an immediate consequence of
Theorem \ref{corlogcocavehinf}. The derivation of
Corollary \ref{CorSharp-Threshold} from Theorem \ref{ThmOur-KKL}
and Proposition \ref{proprusso-location} is exactly the same as
the proof of Theorem 2.1 in \cite{Friedgut-Kalai} (which is the
analogous result for Bernoulli measures on the discrete cube), and
thus is omitted here.

We conclude this section with several analogs of results for
influences on the discrete cube. In the theorem below, part (1)
corresponds to Talagrand's lower bound on the vector of
influences \cite{Talagrand1}, part (2) corresponds to a variant of
the KKL theorem for functions with low influences established
in~\cite{Friedgut-Kalai}, part (3) corresponds to Friedgut's
characterization of functions with a~low influence sum \cite{Friedgut1}
and part (4) corresponds to Hatami's characterization of
functions with
a low influence sum in the continuous case~\cite{hatami09}. Statements~(1),
(3) and (4) of the theorem follow immediately using
Lemma~\ref{Lemmain} from the corresponding statements for the
Entropy-influence proved in~\cite{keller09b}, and statement (2) is an
immediate corollary of
statement~(1).
\begin{Theorem}\label{ThmTalagrand}
Consider the product spaces $(\mathbb R^n, \mu_\rho^{\otimes n})$,
where $\mu_{\rho}$ denotes the Boltzmann measure with parameter
$\rho> 1$. For all $n \ge1$, for any Borel-measurable set $A
\subset\Real^n$, and for all $\alpha>0$, we have:
\begin{longlist}[(2)]
\item[(1)] if $\mu_{\rho}^{\otimes n}(A)=t$, then
\[
\sum_{i=1}^n \frac{\ig_i(A)}{( - \log\ig_i(A))^{1-1/\rho}} \ge
c_1 t(1-t);
\]

\item[(2)] if $\mu_{\rho}^{\otimes n}(A)=t$ and $\max_{1 \leq i \leq n}
\ig_i(A) \leq\alpha$, then
\[
\sum_{i=1}^n \ig_i(A) \geq c_1 t(1-t) (-{\log}\alpha)^{1-1/\rho};
\]

\item[(3)] if $A$ is monotone and $\sum_{i=1}^n \ig_i(A) (-{\log}\ig
_i(A))^{1/\rho}=s$,
then there exists a set $B \subset\mathbb{R}^n$ such that $1_B$ is
determined by at
most $\exp(c_2s/\epsilon)$ coordinates and
$\mu_{\rho}^{\otimes n}(A \bigtriangleup B) \leq
\epsilon$;

\item[(4)] if $\sum_{i=1}^n \ig_i(A) (-{\log}\ig_i(A))^{1/\rho}=s$,
then there exists a set $B \subset\mathbb{R}^n$ such that $1_B$ can be
represented\vspace*{1pt} by a decision tree of depth at most $\exp(c_3
s/\epsilon^2)$ and $\mu_{\rho}^{\otimes n}(A \bigtriangleup B)
\leq\epsilon$,\setcounter{footnote}{3}\footnote{See, for example,
\cite{hatami09} for the definition of a decision tree.}
\end{longlist}
where $c_1,c_2$, and $c_3$ are positive constants which depend only on
$\rho$.
\end{Theorem}

Theorem \ref{ThmTalagrand-intro} is a special case of statements (1)
and (3)
of Theorem \ref{ThmTalagrand} obtained for $\rho=2$.

\subsection{A remark on geometric influences for more general product measures}\label{secsubgeneral-product}

It is worth mentioning that variants of
Theorems \ref{corlogcocavehinf} and \ref{ThmTalagrand} hold
for any measure $\nu$ on $\mathbb R$ which satisfies the following
two conditions:
\begin{itemize}
\item$\nu$ is absolutely continuous with respect to the Lebesgue
measure;

\item there exist constants $\rho\ge1, a>0$, such that for the
isoperimetric function~$\mathcal I_\nu$ of $\nu$ we have
\[
\mathcal I_\nu(t) \ge a \min(t, 1-t) \bigl( - \log\min(t, 1-t) \bigr)^{1
- 1/\rho},\qquad t \in[0, 1].
\]
\end{itemize}
The proofs are similar to those given for Boltzmann measures,
except for the following changes:
\begin{itemize}
\item Lemma \ref{lemincreasingequality}(i) now holds with $h(t)
= \mathcal I_\nu(t)$;

\item Lemma \ref{lemincreasingequality}(ii) does not hold in
general, but this is not a problem since for the proof of
Theorem \ref{corlogcocavehinf} we only need the first part of
the lemma;

\item instead of Lemma \ref{lembarthe}, we use the following
dimension-free isoperimetric inequality which holds for the
product measure $\nu^{ \otimes n}$ (see \cite{barthe04}):

For all $n \ge1$ and for any measurable set $A
\subseteq\mathbb R^n$,
\begin{eqnarray*}
&&
\liminf_{r \downarrow0} \frac{\nu^{ \otimes n}( A +[-r, r]^n)
- \nu^{ \otimes n}( A)}{r} \\
&&\qquad\ge\frac{a}{K}
\min(t, 1-t) \bigl( - \log\min(t, 1-t) \bigr)^{1 - 1/\rho},
\end{eqnarray*}
where $ t = \nu^{ \otimes n}(A)$ and $K>0$ is a universal constant.
\end{itemize}




\section{Boundaries of transitive sets under uniform enlargement}
\label{secIsoperimetry}

In the Gaussian space, the isoperimetric inequality for uniform
enlargement follows from the classical Gaussian isoperimetric
inequality by
Sudakov and Tsirelson~\cite{sudakov78}, Borell \cite{Borell75}
(see also \cite{ehrhard83,bobkov96,bakry96}) and the fact that
the boundary of a set under uniform enlargement always dominates its
usual boundary (i.e., the boundary under $L^2$ enlargement). To be specific,
the boundary under uniform enlargement of any measurable set $A
\subset\Real^n$ with $\mu^{\otimes n}(A)=t$, where $\mu$ is the
Gaussian measure on $\mathbb{R}$, obeys the following lower bound:
%
%
\begin{equation}\label{EqBobkov}
\liminf_{r \downarrow0}\frac{\mu^{\otimes n}(A+[-r, r]^n) -
\mu^{\otimes n}(A)}{r} \ge\phi(\Phi^{-1}(t))
\end{equation}
(where $\Phi$ and $\phi$ are the cumulative distribution function
and the density of the Gaussian distribution in $\Real$), and it is
easy to check that the
bound is achieved when $A$ is an ``axis-parallel'' halfspace (i.e., sets
of the form $\{x \in\mathbb R^n\dvtx x_i \le a \}$ or its complement) with
$\mu^{\otimes n}(A) =t$.


In this section we consider the same isoperimetric problem under
an additional symmetry condition:

\textit{Find a lower bound on the boundary measure} (\textit{under uniform
enlargement}) \textit{of sets in $\Real^n$ that are transitive.}


The invariance under permutation condition rules out candidates like
the axis-parallel halfspaces
and one might expect that under this assumption, a~set should have
``large'' boundary. This intuition is confirmed by
Theorem~\ref{thmisolowerbound}. In this section we prove a
stronger version of this theorem that holds for all Boltzmann
measures.
\begin{Theorem}\label{thmisolowerboundgen}
Consider the product spaces $(\mathbb R^n, \mu_\rho^{\otimes n})$,
where $\mu_\rho$ denotes the Boltzmann measure with parameter
$\rho> 1$. There exists a constant $c =c(\rho)> 0$
such that the following holds for all $n \ge1$:

For any transitive Borel-measurable set $A \subset\Real^n$,
we have
\[
\liminf_{r \downarrow0} \frac{\mu_\rho^{\otimes n} (A+ [-r,r]^n)
- \mu_\rho^{\otimes n} (A)}{r} \geq ct(1-t) (\log n)^{ 1-
1/\rho},
\]
where $t = \mu_{\rho}^{\otimes n}(A)$.
\end{Theorem}

The
transitivity assumption on $A$ implies that
Theorem \ref{thmisolowerboundgen} is an immediate consequence
of Theorem \ref{corlogcocavehinf}, once we establish the
following lemma.
\begin{Lemma}\label{luniformenlargelog-concave}
Let $\lambda$ be a continuous symmetric log-concave density on
$\Real$. Let $A$ be any Borel-measurable subset of $\Real^n$. Then
\[
\liminf_{r \downarrow0} \frac{\nu^{\otimes n} (A+ [-r,r]^n) - \nu
^{\otimes n} (A)}{r}
\ge\sum_{i=1}^n I^h_i(A),
\]
where $h(x) = \lambda( \gL^{-1}(x) )$ for all $x \in[0,1]$.
\end{Lemma}
\begin{pf}
The proof is similar to the proof of
Proposition \ref{luniformenlarge}. For all $1 \leq i \leq n$,
define
\[
B_r^i = A +[-r,r]^{i-1} \times\{0\}^{n-i+1}.
\]
Like in the proof of Proposition \ref{luniformenlarge}, it is
sufficient to show that for each $i$,
%
%
\begin{equation}\label{Eq411}
\liminf_{ r \downarrow0} \frac{\nu^{\otimes n} (B_r^i +
\{0\}^{i-1}\times[-r, r]\times\{0\}^{n-i}) - \nu^{\otimes
n}(B_r^i)}{r} \ge I_i^h(A).\hspace*{-30pt}
\end{equation}
Note that for all $x \in\Real^n$, both $\nu^{\otimes n} (B_r^i)$
and $\nu((B_r^i)^x_i)$ are increasing as functions of $r$, and
thus they tend to some limit as $r \searrow0$. Furthermore, we
can assume that $\nu^{\otimes n} (\bar A \setminus A) = 0$, since
otherwise,
\[
\liminf_{r \downarrow0} \frac{\nu^{\otimes n} (A+ [-r,r]^n) -
\nu^{\otimes n} (A)}{r} \ge\liminf_{r \downarrow0}
\frac{\nu^{\otimes n} (\bar A \setminus A) }{r} \to\infty.
\]
Therefore,
\[
\nu^{\otimes n} (B_r^i) \searrow\nu^{\otimes n}(\bar A) =
\nu^{\otimes n}(A)
\]
and
%
%
\begin{equation}\label{Eq412}
\nu((B_r^i)^x_i) \searrow\nu(A^x_i)
\end{equation}
for almost every $x \in\Real^n$ (with respect to the measure
$\nu^{\otimes n}$).

Now observe that by the one-dimensional isoperimetric inequality
for symmetric log-concave distributions
(Theorem \ref{ThmIso-Bobkov}),
\begin{eqnarray*}
\nu^{\otimes n} (B_r^i + \{0\}^{i-1}\times[-r, r]\times
\{0\}^{n-i}) &=& \E_x \nu\bigl( (B_r^i)^x_i +[-r, r]\bigr)
\\
&\ge&\E_x \gL\bigl( \gL^{-1}(\nu( (B_r^i)^x_i))+ r\bigr).
\end{eqnarray*}
Therefore, using the mean value theorem like in the proof of
Proposition~\ref{luniformenlarge}, we get
%
%
\begin{eqnarray}\label{eqzeta}
&&\liminf_{ r \downarrow0} \frac{\nu^{\otimes n}
(B_r + \{0\}^{i-1}\times[-r, r]\times\{0\}^{n-i}) -
\nu^{\otimes n}(B_r)}{r} \nonumber\\[-8pt]\\[-8pt]
&&\qquad\ge\liminf_{ r \downarrow0}\E_x \inf_{ z \in[ \gL^{-1}(\nu
((B_r^i)^x_i)), \gL^{-1}(\nu( (B_r^i)^x_i))+ r ] } \lambda(z) .
\nonumber
\end{eqnarray}
Finally, by (\ref{Eq412}), for almost every $x \in\Real^n$,
\[
\lim_{r \downarrow0} \inf_{ z \in[ \gL^{-1}(\nu((B_r^i)^x_i)),
\gL^{-1}(\nu( (B_r^i)^x_i))+ r ] } \lambda(z) =
\lambda(\gL^{-1}(\nu(A^x_i))),
\]
and thus, by the dominated convergence theorem,
\[
\liminf_{ r \downarrow0}\E_x \inf_{ z \in[ \gL^{-1}(\nu
((B_r^i)^x_i)), \gL^{-1}(\nu( (B_r^i)^x_i))+ r ] } \lambda(z) = \E_x
\lambda( \gL^{-1}(\nu(A^x_i))) = I^h_i(A).
\]
This completes the proof of the lemma, and thus also the proof of
Theorem~\ref{thmisolowerboundgen}.
\end{pf}

\subsection{\texorpdfstring{Tightness of Theorems \protect\ref{corlogcocavehinf}, \protect\ref{ThmTalagrand} and \protect\ref{thmisolowerboundgen}}
{Tightness of Theorems 3.12, 3.13 and 4.1}}
\label{secsubtightness}

We conclude this section with showing that
Theorems \ref{corlogcocavehinf}, \ref{ThmTalagrand} and \ref
{thmisolowerboundgen} are tight (up to constant
factors) among sets with\vadjust{\goodbreak} constant measure, which we set for
convenience to be $1/2$. We demonstrate this by choosing an
appropriate sequence of ``\textit{one-sided boxes}.''\vspace*{-1pt}
\begin{Proposition}\label{lemtightnesslogconcave}
Consider the product spaces $(\mathbb R^n, \mu_\rho^{\otimes n})$,
where $\mu_\rho$ denotes the Boltzmann measure with parameter
$\rho\ge1$. Let $B_n := (-\infty, a_n ]^n$ where $a_n$ is chosen
such that $\Phi_\rho(a_n)^n = 1/2$. Then there exists a constant
$c=c(\rho)$ such that
\[
\ig_i(B_n) \le c \cdot\frac{(\log n)^{1 - 1/\rho}}{n}
\]
for all $1 \le i \le n$.\vspace*{-1pt}
\end{Proposition}
\begin{pf}
Fix an $i$. By elementary calculation,
\[
\ig_i(B_n) = \Phi_\rho(a_n)^{n-1} \phi_\rho(a_n) = (1/2)^{(n-1)/n}
\phi_\rho(a_n).
\]
Note that $1 - \Phi_\rho(a_n) \asymp n^{-1} $, and thus, by
Lemma \ref{lemboltzmanntail}, $a_n \asymp( \log n)^{1/\rho}$.
Furthermore, since by Lemma \ref{lemboltzmanntail},
$\phi_\rho(z) \asymp z^{\rho-1} (1- \Phi_\rho(z))$ for large $z$,
we have $ \ig_i(B_n) \asymp n^{-1}( \log n)^{1- 1/\rho}$, as
asserted.\vspace*{-1pt}
\end{pf}

The tightness of Theorem \ref{corlogcocavehinf} and
Theorem \ref{ThmTalagrand}(1) follows immediately
from Proposition \ref{lemtightnesslogconcave}. The tightness of
Theorem \ref{thmisolowerboundgen} follows using
Proposition~\ref{luniformenlarge} since $B$ is monotone.
The tightness of Theorem \ref{ThmTalagrand}(2) and the tightness in
$s$ in Theorem~\ref{ThmTalagrand}(3) and Theorem \ref{ThmTalagrand}(4) follows by
considering the subset $B_k \times\mathbb{R}^{n-k} \subset
\mathbb{R}^n$.\vspace*{-1pt}



\section{Geometric influences under rotation}
\label{secrotation}

Consider the product Gaussian measure $\mu^{\otimes n}$ on
$\mathbb R^n$. In Section \ref{secKKL} we obtained lower bounds
on the sum of geometric influences, and, in particular, we showed
that for a transitive set $A \subset\mathbb{R}^n$, the sum is at
least $\Omega(t(1-t) \sqrt{\log n})$, where $t=\mu^{\otimes
n}(A)$.

In this section we consider a different symmetry group, the group
of rotations of $\mathbb R^n$. The interest in this group comes
from the fact that the Gaussian measure is invariant under
rotations while the influence sum is not.

Indeed, a halfspace of measure $1/2$ may have influence sum as
small as of order~$1$ when it is aligned with one of the axis and
as large as of order $\sqrt{n}$ when it is aligned with the
diagonal direction $(1,1,\ldots,1)$.


In this section we show that under some mild conditions (that do
not contain any invariance assumption), rotation allows us to
increase the sum of geometric influences up to $\Omega(t(1-t)\sqrt
{-\log(t(1-t)) }
\sqrt{n})$. The dependence on $n$ in this lower bound is tight for
several examples, including
halfspaces and $L^2$-balls. We note that on the other extreme,
rotation cannot decrease the sum of geometric influences below
$\Omega(t(1-t)\sqrt{-\log(t(1-t))} )$, as follows from
a combination of Proposition \ref{luniformenlarge}, Lemma \ref
{lemincreasingequality}(ii) and the
isoperimetric inequality (\ref{EqBobkov}).\vspace*{-1pt}
\begin{definition}\label{defjcal}
Let $B(x, r): = \{ y \in\mathbb R^n\dvtx\| y - x\|_2 < r\}$ be the open
ball in
$\mathbb R^n$ with center at $x$ and radius $r$, and let $\bar{B}(x,
r)$ be the corresponding\vadjust{\goodbreak}
closed ball. For $\eps> 0$ and $A \subseteq R^n$, define
\[
A_\eps: = \{x \in A \dvtx\bar B(x, \eps) \cap A^c = \varnothing\}
\quad\mbox{and}\quad
A^\eps: = \{ x \in\mathbb R^n \dvtx B(x, \eps) \cap A
\ne\varnothing\}.
\]
Finally, denote by $\jcal_n$ the collection of all measurable sets $B
\subseteq\mathbb R^n$
for which there exists $\delta> 0$ such that for all $0< \eps< \delta
$, we have
%
%
\begin{equation}\label{eqpropertynice}
(B_\eps)^{2\eps} \supseteq B.
\end{equation}
An example of a subset of $\mathbb R^2$ that does not belong to the
class $\mathcal J_2$ is $ \{ (x,y)\dvtx1 \le x < \infty, 0 \le y < 1/x
\}$.
\end{definition}

The crucial ingredient in the proof Theorem \ref{ThmRotation} is a
lemma asserting
that under the conditions of the theorem, an enlargement of $A$ by a
\textit{random}
rotation of the cube $[-r,r]^n$ increases $\mu^{\otimes n}(A)$ significantly.
\begin{Notation}\label{Notofrac}
Let $O = O(n,\mathbb{R})$ be the set of all orthogonal
transformations on $\mathbb R^n$, and let $\pi$ be the (unique)
Haar measure on $O$. Denote by~$M$ a random element of $O$
distributed according to the measure $\pi$.
\end{Notation}
\begin{Lemma} \label{lemrandomrotation}
There exists a constant $K>0$ such that for
any $A \in\jcal_n$, we have
\[
\E_{M \sim\pi} \bigl[ \mu^{\otimes n}\bigl(A+M^{-1}(Kn^{-1/2} [-r,
r]^n) \bigr) \bigr] \ge\mu^{\otimes n}(A)+ \tfrac12 \mu^{\otimes n}(
A^{r/3}\setminus A)
\]
for all sufficiently small $r>0$ (depending on $A$).
\end{Lemma}

First we show that Lemma \ref{lemrandomrotation} implies
Theorem \ref{ThmRotation}.
\begin{pf}
Note that for any $g \in O$,
$g(A)$ is convex, and that $\mu^{\otimes n}$ is invariant under $g$.
Thus by Proposition \ref{luniformenlarge},\footnote{Note
that by Remark \ref{remconvex},
Proposition \ref{luniformenlarge} holds for convex sets.} we
have
\begin{eqnarray*}
\sum_{i=1}^n \ig_i(g(A)) &=&
\lim_{ r \downarrow0} \frac{ \mu^{\otimes n}(g(A)+[-r, r]^n) - \mu
^{\otimes n}(g(A))}{r}\\
&=& \lim_{ r \downarrow0} \frac{ \mu^{\otimes n}(A+g^{-1}([-r,
r]^n) ) - \mu^{\otimes n}(A)}{r}.
\end{eqnarray*}
Furthermore, note that for any $g \in O$,
\begin{eqnarray*}
\lim_{ r \downarrow0} \frac{ \mu^{\otimes n}(A+g^{-1}([-r, r]^n) )
- \mu^{\otimes n}(A)}{r} &\le&\lim_{ r \downarrow0} \frac{ \mu
^{\otimes n}(A+ \sqrt n [-r, r]^n ) - \mu^{\otimes n}(A)}{r} \\
&=&
\sqrt n \times\sum_{i=1}^n \ig_i(A).
\end{eqnarray*}
Therefore, by the dominated convergence theorem,
%
%
\begin{eqnarray}\label{Eq52}
&&\E_{M \sim\pi} \Biggl[ \sum_{i=1}^n \ig_i(M(A)) \Biggr] \nonumber\\[-8pt]\\[-8pt]
&&\qquad=
\lim_{ r \downarrow0} \frac{ \E_{M \sim\pi} [ \mu^{\otimes
n}(A+M^{-1}([-r, r]^n) ) ] -
\mu^{\otimes n}(A)}{r}.\nonumber
\end{eqnarray}
By Lemma \ref{lemrandomrotation}, we have (for a sufficiently
small $r$)
\[
\E_{M \sim\pi} \bigl[ \mu^{\otimes n}\bigl(A+M^{-1}(Kn^{-1/2}[-r,
r]^n) \bigr)\bigr] - \mu^{\otimes n}(A) \ge\tfrac12 \mu^{\otimes n}(
A^{r/3}\setminus A).
\]
By the standard Gaussian isoperimetric inequality,
\[
\mu^{\otimes n}( A^{r/3}\setminus A) \ge\mu\bigl( \bigl(-\infty,
\Phi^{-1}(t)+r/3 \bigr] \bigr).
\]
Substituting into (\ref{Eq52}), we get
\begin{eqnarray*}
\E_{M \sim\pi} \Biggl[ \sum_{i=1}^n \ig_i(M(A)) \Biggr] &\ge&
\limsup_{ r \downarrow0} \frac{\mu( (-\infty,
\Phi^{-1}(t)+K^{-1}n^{1/2}r/3 ] )}{2r} \\
&\geq& \frac{\sqrt
n}{6K} \phi(\Phi^{-1}(t) ) \\
&\ge& c t(1-t)\sqrt{-\log\bigl(t(1-t)\bigr)} \times
\sqrt n
\end{eqnarray*}
for some constant $c>0$ (where the last inequality follows from
the estimation given in Lemma \ref{lemboltzmanntail}, with
$\rho=2$). Thus there exists at least one orthogonal
transformation $g \in O$ such that
\[
\sum_{i=1}^n \ig_i(g(A)) \ge c t(1-t)\sqrt{-\log\bigl(t(1-t)\bigr)} \times
\sqrt n
\]
as asserted.
\end{pf}

Now we present the proof of Lemma \ref{lemrandomrotation}.
\begin{pf*}{Proof of Lemma \ref{lemrandomrotation}}
By Fubini's theorem, we have
%
%
\begin{eqnarray}\label{Eq53}\qquad
&&\E_{M \sim\pi} \bigl[ \mu^{\otimes n}\bigl(A+M^{-1}(Kn^{-1/2} [-r, r]^n) \bigr) \bigr]
\nonumber\\
&&\qquad= \E_{M \sim\pi} \bigl[ \mu^{\otimes n} \{ x \in\mathbb{R}^n \dvtx
x = y + z,
y \in A, z \in M^{-1}(K n^{-1/2} [-r, r]^n) \} \bigr] \\
&&\qquad= \E_{x \sim\mu^{\otimes n}} \bigl[ \pi\{ g \in O\dvtx x = y + z, y
\in A, z \in g^{-1}(K n^{-1/2} [-r, r]^n) \} \bigr].\nonumber
\end{eqnarray}
Since each $x \in A$ can be trivially represented as $y+z$ with $y =x
\in A, z =0
\in g^{-1}(K n^{-1/2} [-r, r]^n)$ for any $g \in O$,
the assertion of the lemma would follow immediately from
(\ref{Eq53}) once we show that for all $x \in A^{r/3}
\setminus A$,
%
%
\begin{equation}\label{Eq54}
\pi\{ g \in O: x = y + z, y \in A, z \in g^{-1}(K n^{-1/2} [-r,
r]^n) \} \geq1/2.
\end{equation}
Since $A \in\jcal_n$, we can choose $r$ sufficiently small such
that $A \subset(A_{r/3})^{2r/3}$, and thus $A^{r/3} \subset
(A_{r/3})^r$. Therefore, for any $x \in A^{r/3} \setminus A$,
there exists $y \in A_{r/3}$, such that $\|x-y\|_2<r$. If there
exists $y' \in B(y,r/3)$ such that $x-y' \in g^{-1}(K n^{-1/2}
[-r, r]^n)$, then $x$ can be represented as $y'+(x-y')$, as
required in the left-hand side of (\ref{Eq54}).
Therefore, it is sufficient to prove the following claim:
\begin{Claim}
For any $x,y \in\mathbb{R}^n$ such that $\|x-y\|_2<r$,
\[
\pi\{ g \in O\dvtx\exists y' \in B(y,r/3) \mbox{ such that }
x-y' \in g^{-1}(K n^{-1/2} [-r, r]^n) \} \geq1/2.
\]
\end{Claim}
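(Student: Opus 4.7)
The plan is to translate the claim into a statement about the position of $gv$ under a Haar-random rotation $g$, where $v := x - y$ satisfies $\|v\|_2 < r$. Writing $y' = y + w$ with $w$ ranging over $B(0, r/3)$, the condition $x - y' \in g^{-1}(Kn^{-1/2}[-r,r]^n)$ becomes $g(v - w) \in Q_K$, where $Q_K := Kn^{-1/2}[-r,r]^n$. Since $g$ is orthogonal, $w \mapsto gw$ is a bijection of $B(0, r/3)$, so the event is equivalent to $gv \in Q_K + B(0, r/3)$, i.e.,
\[
\mathrm{dist}_2(gv, Q_K) < r/3.
\]

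Under Haar measure, $gv$ is uniform on the sphere of radius $\|v\|_2$. Using the standard representation $gv \ed \|v\|_2 \cdot Z/\|Z\|_2$ with $Z \sim \mathrm{N}(0, I_n)$, together with the explicit formula $\mathrm{dist}_2(p, Q_K)^2 = \sum_{i=1}^n \max(0, |p_i| - Kr/\sqrt n)^2$, the problem reduces to showing
\[
\sum_{i=1}^n \max\!\left(0,\ \frac{\|v\|_2\, |Z_i|}{\|Z\|_2} - \frac{Kr}{\sqrt n}\right)^2 < \frac{r^2}{9}
\]
with probability at least $1/2$. Since $\|v\|_2 < r$, the left-hand side is dominated by $(r^2/n)\sum_i(\sqrt n\,|Z_i|/\|Z\|_2 - K)_+^2$.

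I would then split the analysis by the size of $n$. When $n \le n_0$ for a threshold $n_0$ to be determined, picking $K \ge \sqrt{n_0}$ yields $Kr/\sqrt n \ge r$; then $Q_K$ contains the entire ball of radius $\|v\|_2 < r$ centered at the origin, and the event holds deterministically. When $n > n_0$, I would apply $\chi^2$-concentration to obtain $\|Z\|_2^2 \ge n/2$ on an event of probability close to $1$ (for $n_0$ large enough), further bounding the sum by $(r^2/n)\sum_i(\sqrt 2\,|Z_i| - K)_+^2$. A routine Gaussian tail estimate gives $\Ex[(\sqrt 2\,|Z_1| - K)_+^2] \le C e^{-K^2/4}$, so Markov's inequality controls the probability that this sum exceeds $n/9$ by $O(e^{-K^2/4})$.

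The main obstacle is calibrating a single universal constant $K$ that simultaneously makes the trivial ``small $n$'' inclusion work (via $K \ge \sqrt{n_0}$) and keeps the combined ``large $n$'' failure probability (the sum of the $\chi^2$ deviation and the Markov bound) strictly below $1/2$. Once $K$ is fixed large enough to beat both, we obtain $\Pr[\mathrm{dist}_2(gv, Q_K) < r/3] \ge 1/2$ uniformly in $n \ge 1$ and in all $v$ with $\|v\|_2 < r$, which is exactly the claim.
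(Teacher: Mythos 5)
Your proposal is correct and follows essentially the same route as the paper: translate the event to a statement about a Haar-random rotation of $v = x-y$, use the representation $gv \stackrel{d}{=} \|v\|_2\, Z/\|Z\|_2$ with Gaussian $Z$, reduce to a sum of truncated squared Gaussians, bound it via Markov's inequality, and combine with concentration of $\|Z\|_2$ and a union bound to get $\ge 3/4 + 3/4 - 1 = 1/2$. The only cosmetic differences are that you use the exact formula for $\mathrm{dist}_2(\cdot, Q_K)$ where the paper exhibits a specific (slightly suboptimal) witness $y'''$, and that you handle small $n$ by a separate trivial case—a sensible precaution, since the paper's asserted bound $\Pr[\|Z\|_2 > \sqrt n/2] \ge 3/4$ actually fails at $n=1$, though that case is trivial by taking $K$ large enough so that the cube already contains $B(0,r)$.
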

\begin{pf}
Fix $x,y \in\mathbb{R}^n$
such that $\|x-y\|_2<r$. We have
\begin{eqnarray*}
&& \{ g \in O\dvtx\exists y' \in
B(y,r/3) \mbox{ such that } x-y' \in g^{-1}(K n^{-1/2} [-r,
r]^n) \} \\
&&\qquad = \{ g \in O\dvtx\exists y' \in
B(y,r/3) \mbox{ such that } g(x-y') \in K n^{-1/2} [-r, r]^n
\} \\
&&\qquad = \{ g \in O\dvtx\exists y'' \in
B(0,r/3) \mbox{ such that } g(x-y) - y'' \in K n^{-1/2} [-r,
r]^n \} \\
&&\qquad = \Bigl\{ g \in O\dvtx\inf_{y'' \in B(0,r/3)} \|g(x-y)
- y''\|_{\infty} \leq K n^{-1/2} r \Bigr\}.
\end{eqnarray*}
Note that
%
%
\begin{equation}\label{Eq55}
\pi\Bigl\{ g \in O\dvtx\inf_{y'' \in B(0,r/3)} \|g(x-y) -
y''\|_{\infty} \leq K n^{-1/2} r \Bigr\}
\end{equation}
is invariant under rotation of the vector $(x-y)$, and in
particular,
\[
\mbox{(\ref{Eq55})} = \pi\Bigl\{ g \in O\dvtx\inf_{y'' \in B(0,r/3)}
\bigl\|g(\|x-y\|_2 \times e_1) - y''\bigr\|_{\infty} \leq K n^{-1/2} r
\Bigr\},
\]
where $e_1 = (1, 0, \ldots, 0) \in\mathbb R^n$ is the unit vector
along the first coordinate axis.

A well-known property of the Haar measure says that if
$M \in O$ is distributed according to $\pi$, then any column of
$M$ is distributed like a normalized vector of independent
standard Gaussians. That is,
\[
M_{\mathrm{column}} \sim\frac{Z }{\|Z\|_2},
\]
where $Z = (Z_1,\ldots, Z_n)$ is a random $n$-vector with i.i.d.
standard Gaussian entries. Thus, $M(\|x-y\|_2 \times
e_1)$ is distributed like $\|x-y\|_2 \times Z/\|Z\|_2$. Therefore,
we have
\begin{eqnarray*}
\mbox{(\ref{Eq55})} &=& \prob_{Z \sim\mu^{\otimes n}} \biggl( \inf_{y''
\in B(0,r/3)} \biggl\| \|x-y\|_2 \times\frac{Z}{\|Z\|_2} -
y''\biggr\|_{\infty}
\leq K n^{-1/2} r \biggr) \\
& \geq&\prob_{Z \sim\mu^{\otimes n}} \biggl( \inf_{y''' \in
B(0,1/3)} \biggl\| \frac{Z}{\|Z\|_2} - y''' \biggr\|_{\infty} \leq K n^{-1/2}
\biggr).
\end{eqnarray*}
Note that if $Z \in\mathbb{R}^n$ satisfies
\[
\frac{\sum_i Z_i^2 1_{| Z_i | / \|Z\|_2 > Kn^{-1/2}}}{\|Z\|_2^2}
< 1/9,
\]
then the vector $y'''$ defined by $y'''_i = (Z_i \cdot1_{| Z_i |
/ \|Z\|_2 > Kn^{-1/2}})/\|Z\|_2$ satisfies
\[
y''' \in B(0,1/3) \quad\mbox{and}\quad \biggl\| \frac{Z}{\|Z\|_2}
- y'''\biggr\|_{\infty} \leq K n^{-1/2}.
\]
Hence,
\begin{eqnarray*}
\mbox{(\ref{Eq55})} &\geq&\prob_{Z \sim\mu^{\otimes n}} \biggl(
\inf_{y''' \in B(0,1/3)} \biggl\| \frac{Z}{\|Z\|_2} - y'''\biggr\|_{\infty}
\leq K n^{-1/2} \biggr) \\ &\geq&\prob_{Z \sim\mu^{\otimes n}}
\biggl(\frac{\sum_i Z_i^2 1_{| Z_i | / \|Z\|_2 >
Kn^{-1/2}}}{\|Z\|_2^2} < 1/9 \biggr).
\end{eqnarray*}
For $\gamma<1$ and $t>0$, by the Markov inequality,
\begin{eqnarray*}
\prob( \|Z \|^2 < \gamma n ) &\le&\prob\bigl( e^{-t \|Z \|^2} < e^{- t
\gamma n} \bigr) \le e^{ t \gamma n} (\E e^{-t Z_1^2} )^n \\
&=& e^{t \gamma n}
(1+2t)^{-n/2}.
\end{eqnarray*}
Optimizing over $t>0$, we have
%
%
\begin{equation} \label{Gaussiannormconcentration}
\prob( \|Z \|^2 < \gamma n ) \le(\gamma e^{1- \gamma})^{n/2}.
\end{equation}

Finally, again by the Markov inequality,
\[
\prob_{Z \sim\mu^{\otimes n}} \biggl[ \sum_{ i\dvtx|Z_i| > K/2} Z_i^2
\ge\frac{n}{450} \biggr] \le\frac{n \times[ \E Z_1^2 1_{\{ |Z_1|
>K/2 \} }] }{ n/450} \le1/4
\]
for sufficiently large $K>0$, and by (\ref{Gaussiannormconcentration}),
$\prob[\|Z\|^2_2 > n/50 ] \ge3/4$.
Therefore,
\begin{eqnarray*}
\mbox{(\ref{Eq55})} &\geq&\prob_{Z \sim\mu^{\otimes n}}
\biggl(\frac{\sum_i Z_i^2 1_{| Z_i |
/ \|Z\|_2 > Kn^{-1/2}}}{\|Z\|_2^2} < 1/9 \biggr) \\
& \geq&\prob_{Z \sim\mu^{\otimes n}} \biggl[ \biggl(\sum_{ i\dvtx|Z_i|
> K/2} Z_i^2 \le\frac{n}{450} \biggr) \wedge\bigl(\|Z\|_2 > \sqrt n/50\bigr)
\biggr] \\ & \geq&3/4+3/4-1=1/2.
\end{eqnarray*}
This completes the proof of the claim and of
Lemma \ref{lemrandomrotation}.
\end{pf}
\noqed
\end{pf*}

Intuitively, the condition $A \in\jcal_n$ means that the boundary of
$A$ is ``sufficiently smooth.'' One can easily check that if $A \in
\jcal_n$, then
the boundary of~A is a porous set and thus has Hausdorff dimension strictly
less than $n$ (see~\cite{zaj87} and references therein to know more about
porous sets). However, this condition is far from being sufficient.
Here we give a sufficient condition for a set to
belong to $\jcal_n$ in terms of smoothness of its boundary.
\begin{definition}
Let $A \subset\mathbb R^n$ be a measurable set. We write $\partial A
\in C^1$ and
say that the boundary of $A$ is of class $C^1$ if for any point
$z \in\partial A$, there exists $r =r(z)> 0$ and a one-to-one mapping
$\psi$ of $B(z, r)$ onto an open set $D = D \subseteq\mathbb R^n$
such that:
\begin{itemize}

\item$\psi\in C^1( \bar B(z, r))$ and $\psi^{-1} \in C^1( \bar D)$;

\item$\psi(B(z, r) \cap\partial A ) = D \cap\{x \in\mathbb R^n \dvtx
x_1 =0\}$;

\item$\psi(B(z, r) \cap\operatorname{int}(A)) \subseteq(0,\infty)
\times\mathbb{R}^{n-1}$.
\end{itemize}
\end{definition}
\begin{Proposition}
Let $A \subset\mathbb R^n$ be a bounded set with $\partial A \in
C^1$. Then $A \in\mathcal J_n$.
\end{Proposition}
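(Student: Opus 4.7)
The plan is to show that every $x \in A$ admits some $y \in A_\eps$ with $\|x - y\|_2 < 2\eps$ whenever $\eps$ is smaller than a certain $\delta > 0$ depending only on $A$; this gives $x \in (A_\eps)^{2\eps}$. First I would establish a uniform tubular neighborhood of $\partial A$. Since $A$ is bounded, $\partial A$ is compact, and combining the $C^1$ hypothesis with a finite subcover of $\partial A$ by chart domains yields constants $\eta > 0$ and a modulus $\omega : (0, \eta] \to (0, 1/8]$ with $\omega(r) \downarrow 0$ as $r \downarrow 0$, such that: (i) at every $z \in \partial A$ there is a well-defined inward unit normal $\nu(z)$, depending continuously on $z$; (ii) every $x$ with $d(x, \partial A) < \eta$ has a unique nearest boundary point $\pi(x)$, and $x - \pi(x) = \pm d(x, \partial A)\,\nu(\pi(x))$, with sign $+$ exactly when $x \in \mathrm{int}(A)$; and (iii) for all $z, w \in \partial A$ with $\|w - z\|_2 \le \eta$, $|\langle w - z, \nu(z)\rangle| \le \omega(\|w - z\|_2)\,\|w - z\|_2$. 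These are standard consequences of $C^1$ regularity and compactness: straighten $\partial A$ locally via the charts $\psi$ from the definition, and use uniform continuity of $\nabla \psi$ and $\nabla \psi^{-1}$ on the compact closures of the chart domains.

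Set $\delta := \eta/4$ and fix $0 < \eps < \delta$ and $x \in A$. If $d(x, \partial A) \ge \eps$, take $y := x$: the closed ball $\bar B(x, \eps)$ is connected and disjoint from $\partial A$, hence lies in a single component of $\mathbb{R}^n \setminus \partial A$, and the $C^1$ structure guarantees that each such component is contained in either $A$ or $A^c$, so $\bar B(x, \eps) \subseteq A$ and $x \in A_\eps$. Otherwise $d := d(x, \partial A) < \eps$, $z := \pi(x)$ exists, and I take
\[
y := z + \tfrac{3\eps}{2}\,\nu(z),
\]
so $\|x - y\|_2 = \tfrac{3\eps}{2} - d < \tfrac{3\eps}{2} < 2\eps$. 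It remains to verify $y \in A_\eps$, for which it is enough (by the same connectedness argument) to show $d(y, \partial A) > \eps$. For $w \in \partial A$ with $\|w - z\|_2 > 4\eps$ one has $\|y - w\|_2 \ge \|w - z\|_2 - \tfrac{3\eps}{2} > \tfrac{5\eps}{2} > \eps$. For $w \in \partial A$ with $\|w - z\|_2 \le 4\eps$ (in particular $\le \eta$), decompose $w - z = u_\parallel\,\nu(z) + u_\perp$ with $u_\perp \perp \nu(z)$; property (iii) gives $|u_\parallel| \le \tfrac{1}{8}\|w - z\|_2 \le \tfrac{\eps}{2}$, hence
\[
\|y - w\|_2^2 = \bigl(\tfrac{3\eps}{2} - u_\parallel\bigr)^2 + \|u_\perp\|_2^2 \ge \bigl(\tfrac{3\eps}{2} - \tfrac{\eps}{2}\bigr)^2 = \eps^2,
\]
with equality forcing $u_\parallel = \eps/2$ and $u_\perp = 0$, i.e.\ $\|w - z\|_2 = \eps/2$, which contradicts $|u_\parallel| \le \|w - z\|_2 / 8$. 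Thus $\|y - w\|_2 > \eps$ for every $w \in \partial A$, as required.

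The principal technical point is extracting the uniform constants $\eta$ and $\omega$ of the first step from the pointwise $C^1$ hypothesis; this requires coordinating the locally defined charts $\psi_z$ on a finite open cover of the compact set $\partial A$ and exploiting uniform continuity of the gradients on compact chart domains. Once the tubular neighborhood and the uniform perpendicularity estimate (iii) are in hand, the explicit choice of $y$, the bound $\|x - y\|_2 < 2\eps$, and the estimate $d(y, \partial A) > \eps$ are elementary geometry.
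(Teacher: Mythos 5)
Your proof is essentially correct and takes a genuinely different route from the paper's. The paper argues by contradiction: from a sequence $x^m \in A$ with $x^m \notin (A_{1/m})^{2/m}$ it extracts a limit point $x^0 \in \partial A$, sets up local $C^1$ graph coordinates only near that single point, and shoves $x^m$ inward along the normal direction to produce a point of $A_{1/m}$ within distance $2/m$. You instead argue directly, building a global uniform flatness estimate on the compact boundary (your modulus $\omega$) and then giving an explicit inward-push construction $y = z + \tfrac{3\eps}{2}\nu(z)$ for every $x \in A$ at once. Your approach is more constructive and makes the geometry transparent; the paper's is a bit more economical because the compactness is used only to produce a single limit point, after which all estimates are purely local.

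Two small gaps are worth flagging. First, you write that to conclude $y \in A_\eps$ it is enough to show $d(y, \partial A) > \eps$; this is not literally true, since $y$ must also lie in $A$. You do get this, but it requires an extra line: parametrize $\gamma(t) = z + t\nu(z)$, note that property (iii) forces $\gamma(t) \notin \partial A$ for $0 < t \le 4\eps$ (else $t = u_\parallel \le \omega(t)\,t \le t/8$), so $\gamma((0, 3\eps/2])$ stays in one component of $\Real^n \setminus \partial A$; since it enters $\mathrm{int}(A)$ for small $t$ (inward normal), $y \in \mathrm{int}(A)$. Second, the uniqueness claim in your property (ii) is actually false for $C^1$ boundaries that are not $C^{1,1}$ — e.g.\ a region bounded locally by $y = |x|^{3/2}$ has two equidistant nearest boundary points from points on the inward normal ray through the origin. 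Fortunately your argument never uses uniqueness: existence of some nearest point $z$ (compactness) plus the normal alignment $x - z = d\,\nu(z)$ at any nearest point is all you need, so you can simply drop ``unique.'' Finally, the branch $d(x, \partial A) \ge \eps$ with $y = x$ has a borderline issue at $d = \eps$ exactly, where $\bar B(x, \eps)$ may touch $\partial A$; replacing the case split by $d \ge 2\eps$ versus $d < 2\eps$ (after taking $\delta = \eta/8$, say) removes it.
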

\begin{pf}
Suppose on the contrary that $A \notin\mathcal J_n$. Then there
exists a sequence $\{x^m\}_{m=1}^{\infty} $ such that $x^m \in A$ but
$x^m \notin(A_{1/m})^{2/m}$. Since $A$ is bounded, the sequence
contains a
subsequence $\{x^{m_k}\}$ converging to a point $x^0$. Clearly,
$x^0 \in\partial A$.

Since $\partial A \in C^1$, we can define a new set of local
coordinates $(y_1,y_2, \ldots,y_n)$ [also denoted by $(y_1, y')$, where
$y' \in\mathbb{R}^{n-1}$], such that:
\begin{longlist}[(2)]
\item[(1)] the point $x^0$ is the origin with respect to the $y$-coordinates;

\item[(2)] there exists an open neighborhood
$(-\delta_0,\delta_0) \times U \subseteq\mathbb R \times\mathbb R^{n-1}$
containing the origin and a continuously differentiable function
$f\dvtx U \to\mathbb R_+$, such that in the $y$-coordinates,
\[
\partial A \cap[(-\delta_0, \delta_0) \times U] = \{(f(y'), y')\dvtx
y'\in U \}
\]
and
%
%
\begin{equation}\label{Eq5110}
\operatorname{int} A \cap[(-\delta_0, \delta_0) \times U] = \{(y_1, y')
\dvtx y' \in U, f(y') < y_1 < \gd_0 \}.
\end{equation}
\end{longlist}

By the construction of the new coordinates, $f(y') \geq0$ for all
$y' \in U$ and $f(0):= f(0,0,\ldots,0)=0$. Since $f \in C^1(U)$,
it follows that $\nabla f(0) = 0$. Hence, by the continuity of the
partial derivatives of $f$, there exists $r_0>0$ such that
$\|\nabla f(y')\|_\infty\le1/(3 \sqrt{n}) $ for all $y' \in
B_{n-1}(0,r_0) \subseteq U$.

Let $y^m = (y^m_1,(y^m)')$ be the representation of the point
$x^m$ in the $y$-coordinates. Find $m$ large enough such that
$1/m < \min\{\delta_0/10, r_0/10 \}$, and~$y^m$ lies within $A \cap
[0, \delta_0/2] \times B_{n-1}(0,r_0/2)$. Define
\[
z = (z_1, z_2,\ldots, z_n) = y^m + (1.5m^{-1},0, \ldots, 0,0).
\]
We claim that $B(z, 1/m) \subseteq A$. This would be a contradiction to the
hypothesis $y^m \notin(A_{1/m})^{2/m}$.

Note that by the choice of $m$, we have $z \in A$, and moreover,
%
%
\begin{eqnarray}\label{Eq5111}
\operatorname{dist} ( z, \partial A) &\geq&\operatorname{dist} \bigl(z,
\partial A \cap[(-\gd_0, \gd_0)
\times B_{n-1}(0, r_0)] \bigr) \nonumber\\[-8pt]\\[-8pt]
&=& \inf_{ y' \in B_{n-1}(0, r_0)} \bigl\|
\bigl(y^m_1+1.5m^{-1},(y^m)'\bigr) -(f(y'), y')\bigr\|_2.\nonumber
\end{eqnarray}
We would like to show that if $\|(y^m)'-y'\|_2$ is ``small,'' then
$|y^m_1+1.5m^{-1}-f(y')|$ is ``big,'' and thus in total, the right-hand
side of (\ref{Eq5111}) cannot be ``too small.''

Define $w_1 := y^m_1 + 1.5 m^{-1} - f((y^m)')$. Note that since
$y^m \in A$, it follows from (\ref{Eq5110}) that $w_1
\ge1.5 m^{-1}$. By the mean value theorem, for each $y' \in
B_{n-1}(0,r_0)$,
\begin{eqnarray*}
|f((y^m)') - f(y')| &\leq& \Bigl(\sup_{ y'' \in B_{n-1}(0,r_0)}
\|\nabla f(y'')\|_\infty\Bigr) \|(y^m)'-y'\|_1 \\[-2pt] &\le&
\frac{\|(y^m)'-y'\|_1}{3\sqrt{n}} \leq\frac{\|(y^m)'-y'\|_2}{3},
\end{eqnarray*}
and thus
\[
|y^m_1+1.5m^{-1}-f(y')|=\bigl|w_1-\bigl(f(y')-f((y^m)')\bigr)\bigr| \geq1.5m^{-1} -
\frac{\|(y^m)'-y'\|_2}{3}.
\]
Consequently, if $\|(y^m)'-y'\|_2 \geq4.5m^{-1}$, then
\[
\bigl\| \bigl(y^m_1+1.5m^{-1},(y^m)'\bigr) -(f(y'), y')\bigr\|_2 \geq
\|(y^m)'-y'\|_2 \geq4.5m^{-1},
\]
and if $\|(y^m)'-y'\|_2 < 4.5m^{-1}$, then
\begin{eqnarray*}
&&
\bigl\| \bigl(y^m_1+1.5m^{-1},(y^m)'\bigr) -(f(y'), y')\bigr\|_2 \\[-2pt]
&&\qquad\ge\sqrt{
\|(y^m)'-y'\|_2^2 + \biggl(1.5m^{-1} - \frac{\|(y^m)'-y'\|_2}{3}
\biggr)^2} \\[-2pt]
&&\qquad= \min_{0 \leq s < 4.5m^{-1}} \sqrt{ s^2 + (1.5m^{-1} -s/3)^2} =
\sqrt{\frac{81}{40}} m^{-1}.
\end{eqnarray*}
Combining the two cases, we get
\begin{eqnarray*}
\operatorname{dist}( z, \partial A) &\ge&\inf_{ y' \in B_{n-1}(0, r_0)}
\bigl\| \bigl(y^m_1+1.5m^{-1},(y^m)'\bigr) -(f(y'), y')\bigr\|_2 \\[-2pt] &\geq&\min
\Biggl( 4.5m^{-1},\sqrt{\frac{81}{40}} m^{-1} \Biggr) >1/m.
\end{eqnarray*}
This completes the proof.\vspace*{-3pt}
\end{pf}

If the condition $A \in\jcal_n$ is removed, we can prove only a
weaker lower bound on the maximal sum of geometric influences that
can be obtained by rotation.\vspace*{-3pt}
\begin{Proposition}\label{PropRotation}
Consider the product Gaussian measure $\mu^{\otimes n}$ on
$\mathbb R^n$. For any convex set $A$ with $\mu^{\otimes n}(A) =
t$, there exists an orthogonal transformation $g$ on
$\mathbb R^n$ such that
\[
\sum_{i=1}^n \ig_i(g(A)) \ge ct(1-t) \sqrt{-\log\bigl(t(1-t)\bigr)} \frac
{\sqrt n}{\sqrt{\log
n}},
\]
where $c> 0$ is a universal constant.\vadjust{\goodbreak}
\end{Proposition}

The proof of Proposition \ref{PropRotation} uses a weaker variant
of Lemma \ref{lemrandomrotation}:
\begin{Lemma}\label{lemmrandomrotation}
Let $M$ be as defined in Notation \ref{Notofrac}. There
exists a constant $K>0$ such that for any $A \subset\mathbb{R}^n$
and for any $r>0$, we have
\[
\E_{M \sim\pi} \bigl[ \mu^{\otimes n}\bigl(A+M^{-1}\bigl(K \sqrt{\log n}
\cdot n^{-1/2} [-r, r]^n\bigr) \bigr) \bigr] \ge\mu^{\otimes n}(A)+ \tfrac
12 \mu^{\otimes n}( A^{r}\setminus A).
\]
\end{Lemma}
\begin{pf}
By Fubini's theorem, we
have
\begin{eqnarray*}
&&\E_{M \sim\pi} \bigl[ \mu^{\otimes n}\bigl(A+M^{-1}\bigl(K \sqrt{ \log n}
\cdot n^{-1/2} [-r, r]^n\bigr) \bigr) \bigr] \\
&&\qquad= \E_{x \sim\mu^{\otimes n}} \bigl[ \pi\bigl\{ g \in O\dvtx x \in
A+g^{-1}\bigl(K \sqrt{\log n} \cdot n^{-1/2} [-r, r]^n\bigr) \bigr\} \bigr].
\end{eqnarray*}
Thus it is sufficient to prove that for any $x \in A^r \setminus
A$,
\[
\pi\bigl\{ g \in O\dvtx x \in A+g^{-1}\bigl(K \sqrt{\log n}
\cdot n^{-1/2} [-r, r]^n\bigr) \bigr\} \geq1/2.
\]
Equivalently, it is sufficient to prove that for any $x \in
B(0,r)$,
\[
\pi\bigl\{ g \in O\dvtx x \in g^{-1}\bigl(K \sqrt{\log n}
\cdot n^{-1/2} [-r, r]^n\bigr) \bigr\} \geq1/2.
\]
We can assume without loss of generality that $x=r' \cdot e_1$ for
some $r'<r$. By the argument used in the proof of
Lemma \ref{lemrandomrotation}, if $M \in O$ is distributed
according to $\pi$, then $M(r' \cdot e_1)$ is distributed like $r'
\cdot Z/\|Z\|_2$, where $Z = (Z_1,\ldots, Z_n)$ is a random
$n$-vector with i.i.d. standard Gaussian entries. Hence
%
%
\begin{eqnarray}\label{Eq56}
&&\pi\bigl\{ g \in O\dvtx x \in g^{-1}\bigl(K \sqrt{\log n}
\cdot n^{-1/2} [-r, r]^n\bigr) \bigr\} \nonumber\\
&&\qquad= \prob_{Z \sim\mu
^{\otimes n}} \biggl(
\biggl\|r' \frac{Z}{\|Z\|_2} \biggr\|_{\infty} \leq K \sqrt{\log n} \cdot
n^{-1/2} r \biggr) \nonumber\\[-8pt]\\[-8pt]
&&\qquad\geq\prob_{Z \sim\mu^{\otimes n}} \biggl( \biggl\| \frac{Z}{\|Z\|_2}
\biggr\|_{\infty} \leq K \sqrt{\log n} \cdot n^{-1/2} \biggr) \nonumber\\
&&\qquad\geq\prob_{Z \sim\mu^{\otimes n}} \bigl[ \bigl(\|Z\|_{\infty}
\leq K \sqrt{\log n}/\sqrt{50} \bigr) \wedge\bigl(\|Z\|_2 \ge\sqrt n/\sqrt{50}\bigr)
\bigr].\nonumber
\end{eqnarray}
We have
\begin{eqnarray*}
&&\prob_{Z \sim\mu^{\otimes n}} \bigl(\|Z\|_{\infty} \leq K \sqrt{\log
n}/\sqrt{50}\bigr) \\
&&\qquad\geq 1-n \prob\bigl(Z_i > K \sqrt{\log n}/\sqrt{50}\bigr)\\
&&\qquad\geq 1-
\frac{n}{\sqrt{2 \pi}} \cdot n^{-K^2/100} \geq3/4
\end{eqnarray*}
for a sufficiently big $K$. Therefore,
\[
\mbox{(\ref{Eq56})} \geq3/4+3/4-1=1/2,
\]
and this completes the proof of the lemma.
\end{pf}

The derivation of Proposition \ref{PropRotation} from
Lemma \ref{lemmrandomrotation} is the same as the derivation of
Theorem \ref{ThmRotation} from Lemma
\ref{lemrandomrotation}.\vadjust{\goodbreak}

Note that the convexity assumption on $A$ is used only to apply
Proposition \ref{luniformenlarge} that relates the sum of
influences to the size of the boundary with respect to uniform
enlargement. Thus, our argument also shows that for \textit{any}
measurable set $A$ with $\mu^{\otimes n}(A)=t$, there exists an
orthogonal transformation $g$ on $\mathbb R^n$ such that
\[
\lim_{ r \downarrow0} \frac{ \mu^{\otimes n}(g(A)+[-r, r]^n) -
\mu^{\otimes n}(g(A))}{r} \ge ct(1-t) \sqrt{-\log\bigl(t(1-t)\bigr)}
\frac{\sqrt n}{\sqrt{\log n}},
\]
where $c> 0$ is a universal constant.

Finally, we note that apparently the assertion of
Proposition \ref{PropRotation} is not optimal, and the
lower bound asserted in Theorem \ref{ThmRotation} should hold for
general convex sets.

\section*{Acknowledgments}

It is our pleasure to thank Omer Tamuz for several motivating
discussions that served as a starting point of the project. We would
also like to thank Franck Barthe, Lawrence Craig Evans, Steven N. Evans
and Gil Kalai for helpful suggestions. We also thank two anonymous
referees for careful reading of the manuscript and for making numerous
corrections and suggestions.


%

%
\printaddresses

\end{document}